\newtheorem{theorem}{Theorem}
\newtheorem*{theorem*}{Theorem}
\newtheorem*{maintheorem*}{Main Theorem}
\newtheorem{lemma}[theorem]{Lemma}
\newtheorem*{lemma*}{Lemma}
\newtheorem{fact}[theorem]{Fact}
\newtheorem*{fact*}{Fact}
\newtheorem{corollary}[theorem]{Corollary}
\newtheorem*{corollary*}{Corollary}
\newtheorem{proposition}[theorem]{Proposition}
\newtheorem*{proposition*}{Proposition}
\newtheorem*{gquestion*}{Guiding Question}
\theoremstyle{definition}
\newtheorem*{definition*}{Definition}
\newtheorem{remark}[theorem]{Remark}
\newtheorem*{remark*}{Remark}
\newtheorem{question}[theorem]{Question}
\newtheorem*{question*}{Question}
\newcommand{\F}{{\mathcal{F}}}
\newcommand{\I}{{\mathcal{I}}}
\newcommand{\J}{{\mathcal{J}}}
\newcommand{\M}{{\mathcal{M}}}
\newcommand{\U}{{\mathcal{U}}}
\newcommand{\ZZ}{{\mathbb{Z}}}
\renewcommand{\a}{{\mathfrak{a}}}
\DeclareMathOperator{\restr}{\upharpoonright}
\newcommand{\simpleset}[1]{{\{{#1}\}}}
\newcommand{\set}[2]{{\{ {#1} \mid {#2} \}}}
\newcommand{\infsubset}[1]{{[#1]^{\omega}}}
\newcommand{\card}[1]{|#1|}
\newcommand{\intersection}{\cap}
\newcommand{\union}{\cup}
\DeclareMathOperator{\non}{non}
\DeclareMathOperator{\bairespace}{{^\omega \omega}}
\DeclareMathOperator{\cantorspace}{{^\omega 2}}
\DeclareMathOperator{\id}{id}
\DeclareMathOperator{\fix}{fix}
\title{Coanalytic families of functions}
\author{Julia Millhouse}
\address{TU Wien, Institute for Discrete Mathematics and 
Geometry,  Wiedner Hauptstrasse 8–10, 1040 Vienna, Austria}
\email{julia.marie.millhouse@univie.ac.at}
\author{Lukas Schembecker}
\address{University of Hamburg, Department of Mathematics, Bundesstraße 55, 20146 Hamburg, Germany}
\email{lukas.schembecker@uni-hamburg.de}
\begin{document}

    \begin{abstract}
        For Van Douwen families, maximal families of eventually different permutations and maximal ideal independent families we show that the existence of a $\Sigma^1_2$ family implies the existence of a $\Pi^1_1$ family of the same size.
        We also prove a similar, but slightly weaker result for generating sets of cofinitary groups.
        
    \end{abstract}

    \maketitle

    \section{Introduction}
    Many combinatorial sets of reals defining cardinal
    characteristics can be constructed given a wellordering
    of the continuum, and hence 
    $\Sigma_2^1$ examples of such sets exist in {\sf L}, given the
    $\Sigma_2^1$-definable wellorder of the constructible reals. 
    This was initially observed by G\"odel \cite{Godel39}.
    With more careful methods, the recursive construction can 
    be done in such a way as to yield a coanalytic ($\Pi_1^1$)
    witness to the combinatorial family in question; this
    is of particular interest when the family is known 
    to not be analytic, as then the minimal complexity of such an 
    object is then decided by the coanalytic witness. 
    A robust coding technique originating in the work of 
    Erd\H{o}s, Kunen, and Mauldin \cite{Erdos1981}, later
    streamlined by Miller \cite{miller1989}, has been the 
    main tool for obtaining coanalytic witnesses of various
    combinatorial families in models of \textsf{V=L},
    see also \cite{Zoltan2014}. 
    In the literature at the intersection of 
    descriptive set theory and set theory of the reals, 
    this method has been used in conjunction with the 
    theory of preservation and indestructibility in forcing 
    in order to show that these coanalytic sets are consistent
    with $\neg$\textsf{CH} or even that some cardinal
    characteristic inequality of interest can be realized with
    a coanalytic witness for the cardinal of value 
    $\aleph_1$; see, for example, \cite{bergfalk2022projective}, \cite{FSTcofin}, \cite{BFK19}, or 
    \cite{FischerSchrittesser_2021}.
  
    More recently, T\"ornquist \cite{Tornquist_2013}
    has constructed a coanalytic mad family under weaker 
    assumptions than \textsf{V=L}; namely, he shows that 
    assuming there exists a $\Sigma_2^1$ mad family, 
    then there exists a coanalytic mad family. His proof
    is purely combinatorial and provides a simpler way to obtain
    the aforementioned applications of Miller's method 
    because of its ability to be applied in models of
    $\neg$\textsf{CH}. Statements and proofs 
    resembling T\"ornquist's \cite{Tornquist_2013}
    subsequently were given
    for other combinatorial sets of reals such as
    maximal independent families,
    maximal eventually different families, and ultrafilter bases
    (see \cite{BFK19},
    \cite{FischerSchrittesser_2021}, and \cite{schilhanUF}, respectively). In \cite{Millhouse24},
    these and further examples of where this implication appears 
    in
    the literature are presented in a general framework following the initial structure of the proof for 
    mad families.
  
    Working with this framework, in this paper we consider four
    cases of combinatorial sets of reals: Van 
    Douwen families (Section 2), maximal eventually different 
    families of permutations (Section 3), maximal ideal independent families (Section 5), and
    maximal cofinitary groups (Section 4). In the first 
    three of this list, we show that the existence of a 
    $\Sigma_2^1$ such family 
    implies the existence of a coanalytic family of the 
    same size; in the case of maximal cofinitary groups, 
    we obtain the analagous result for 
    the generating set of a maximal cofinitary group
    which is also maximal as an eventually different
    family of permutations.

    \section{Van Douwen families}

     A family $F \subseteq \bairespace$ is
    called \emph{eventually different} if 
    $\card{f \intersection g} < \omega$ for all distinct
    $f,g \in F$, i.e., there are only finitely many 
    $n$ such that $f(n) = g(n)$. Such a family $F$ is \emph{maximal} if it is maximal with respect
    to inclusion among all eventually different families; 
    equivalently, for any $f \in \omega^\omega$ there exists
    $g \in F$ such that $\card{f \intersection g} = 
    \omega$. A strengthening of this  notion of maximality
    is that of being Van Douwen: we say an eventually different
    family 
    $F$ is \emph{Van Douwen} if for any infinite partial
    function $f \in \bairespace$, there exists 
    $g \in F$ with $\card{f \intersection g} = 
    \omega$. In other words, Van Douwen families are
    maximal eventually different families which are also
    maximal with respect to infinite partial functions. 
    
    Both \textsf{CH} and \textsf{MA} imply the existence
    of Van Douwen families, and Zhang \cite[Theorem 4.2]{Zhang99} 
    shows that under \textsf{CH}, there exists a 
    Cohen-indestructible Van Douwen family.
    Later, Raghavan \cite{Raghavan_2010} proved that there always is a Van Douwen family, answering a question by Van Douwen---hence the naming. 
    Regarding definability in the sense of the 
    projective hierarchy, \cite{Raghavan_2010} also shows that Van Douwen families can never be analytic,
     in stark contrast to the situation for maximal eventually different families. Indeed, Shelah and Horowitz \cite{HorowitzShelah_2024}
	constructed under \textsf{ZFC} 
	a Borel maximal eventually 
	different family of size $\mathfrak{c}$. As Borel and analytic sets satisfy the perfect 
	set property and maximal eventually different families
	cannot be countable, any Borel or 
	analytic maximal eventually different family must 
	always be of size $\mathfrak{c}$, 
    leaving open the question as to the projective 
    definability of maximal eventually different families of size 
	strictly less than $\mathfrak{c}$ in models of
	$\neg$\textsf{CH}.
    
	In \cite{FischerSchrittesser_2021} Fischer and Schrittesser construct a 
	maximal eventually different family 
	indestructible by a countable support iteration or product of 
	Sacks-forcing of any length, and that moreover
    a coanalytic such family exists in {\sf L}. 
	Specifically, the construction of 
    \cite{FischerSchrittesser_2021} can be carried
    out in {\sf L} in a $\Sigma_2^1$ way, and 
	\cite[Theorem 8]{FischerSchrittesser_2021} establishes
	that the existence of a $\Sigma_2^1$ maximal eventually different 
    family
	is equivalent to the existence of a coanalytic 
	such family; this last
    argument follows the structure of Törnquist's original \cite{Tornquist_2013} for the case of mad families.
    In the context of functions, \cite{FischerSchrittesser_2021} 
    obtains a $\Pi_1^1$ set from one which is $\Sigma_2^1$
    by a continuous coding of two reals $(f,y) \in 
    \bairespace \times \bairespace$ into one $g \in \bairespace$,
    so that when $(f,y) \mapsto g$ is applied to the 
    $\Pi_1^1$ subset of which the $\Sigma_2^1$ med family is 
    a projection, the resulting image is still maximal 
    eventually different. 
    This is achieved by directly coding both $f$ and the real $y$ into the function 
	values of $g$. 
    However, their coding destroys the property of being 
    Van Douwen, and so the equivalence 
    between $\Sigma_2^1$ and $\Pi_1^1$ Van Douwen families
    required a different coding argument, used below. 
    
	\begin{theorem}\label{THM_vandouwen}
		If there is a $\Sigma^1_2$ Van Douwen family, then there is a $\Pi^1_1$ Van Douwen family of the same size.
	\end{theorem}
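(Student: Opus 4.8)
The plan is to follow the coding template of T\"ornquist \cite{Tornquist_2013}, in the form adapted to maximal eventually different families by Fischer and Schrittesser \cite{FischerSchrittesser_2021} and streamlined in \cite{Millhouse24}, but to replace their coding by one that survives the strengthening from maximality to the Van Douwen property. Let $F$ be a $\Sigma^1_2$ Van Douwen family. By Shoenfield's theorem $F$ is $\omega_1$-Suslin, so fix a tree $T$ on $\omega\times\omega_1$ with $F=p[T]$; for $\alpha<\omega_1$ write $F_\alpha=p[T\restr\alpha]$ (restricting the ordinal coordinate to $\alpha$) for the corresponding analytic subfamily, so that $F=\bigcup_{\alpha<\omega_1}F_\alpha$ increasingly. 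First I would set up a deterministic transfinite recursion of length $\card F$ producing a family $G=\set{g_\xi}{\xi<\card F}$ together with, at each stage $\xi$, a certificate $c_\xi\in\bairespace$ which codes: (i) a genuine wellorder of order type $\xi+1$; (ii) the sequence $\seq{g_\eta}{\eta\le\xi}$; (iii) for each $\eta\le\xi$ a countable ordinal $\alpha_\eta$ and a branch through $T\restr\alpha_\eta$ exhibiting an element $f_\eta\in F_{\alpha_\eta}$ ``absorbed'' at stage $\eta$; and (iv) bookkeeping guaranteeing that every element of $F$ is eventually absorbed. All choices made at stage $\xi$ (which $f_\xi$ to absorb next, which branch, how to lay down the code) are to be the canonically least ones relative to $T$ and $\seq{g_\eta}{\eta<\xi}$, so the recursion is deterministic; and the construction is to be \emph{self-coding}: $g_\xi$ is chosen so that $c_\xi$ can be recovered from $g_\xi$ by a fixed Borel decoding $D$.

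Granting such a construction, one sets
\[
G=\set{g\in\bairespace}{D(g)\downarrow,\ D(g)\text{ codes a wellfounded relation, satisfies the Borel legitimacy clauses, and has $g$ as its last coded function}}.
\]
Since ``codes a wellfounded relation'' is $\Pi^1_1$, the remaining clauses are Borel, and $D$ is a Borel operation on $g$ rather than an existential quantifier over reals, $G$ is $\Pi^1_1$; rigidity of the canonical $T$-recursion together with wellfoundedness of the certificate forces $G=\set{g_\xi}{\xi<\card F}$. Because every $f\in F$ is absorbed, $\xi\mapsto g_\xi$ is injective, so $\card G=\card F$; Van Douwen-ness transfers, since for an infinite partial function $q$ one picks $f\in F$ with $\card{f\intersection q}=\omega$ and the stage absorbing $f$ is arranged to yield $g_\xi$ with $\card{g_\xi\intersection q}=\omega$; and eventual difference of $G$ follows from that of $F$ together with the smallness of the modification $g_\xi\mapsto f_\xi$. (By the perfect-set property modulo $\aleph_1$ for $\omega_1$-Suslin sets, $\card F$ is $\aleph_1$ or $\mathfrak{c}$; when $\card F=\mathfrak{c}>\aleph_1$ one replaces, in the certificate, the full history by a code for a countable elementary submodel containing $T$ and $g_\xi$ and witnessing — via elementarity and $T$-absoluteness — that $g_\xi$ occurs in the canonical construction, along the lines of \cite{Millhouse24}.)

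The one genuinely new point, and the hard part, is the design of the self-coding. Fischer and Schrittesser code $c_\xi$ directly into the values of $g_\xi$ along a fixed set of coordinates in a fixed redundant format; for maximal eventually different families this is harmless, but for Van Douwen families it is fatal, since an infinite partial function supported on the coding coordinates but violating the format is met by no member of the family. The remedy I would pursue is to let the set of coding coordinates of $g_\xi$, and the format, depend on $c_\xi$ itself (equivalently, on an auxiliary parameter built into the certificate), and to make this coordinate set sparse and genuinely varying with $\xi$, so that $g_\xi$ differs from the absorbed element $f_\xi$ only on an ``avoidable'' set and the hittings supplied by $F$ survive; one then interleaves into the recursion explicit steps which, for each infinite partial function $q$, force a hit of $q$ off the coding coordinates of some absorbed copy of an $F$-hitter of $q$. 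The main obstacle is precisely to show that this relaxed, moving coding still leaves enough freedom at every stage to absorb all of $F$ while carrying out these interleaved steps — that is, that self-coding and maximality against every infinite partial function can be made to cohere. Once the geometry of the coding coordinates and the bookkeeping are arranged compatibly, the surrounding apparatus (determinism of the recursion relative to $T$, the self-coding lemma giving $\Pi^1_1$-ness, and the verification that $F$ is absorbed) is a routine instance of the framework of \cite{Millhouse24}.
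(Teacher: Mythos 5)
There is a genuine gap: the one step you yourself flag as ``the hard part'' --- designing a coding that survives the Van Douwen requirement --- is exactly the content of the theorem, and your proposal does not supply it; it only describes properties a hypothetical ``moving, sparse'' coding would need to have. The paper's proof shows that no such elaborate device is needed. Writing $F=\mathrm{proj}_0[H]$ for a $\Pi^1_1$ set $H$ uniformized to a partial function, one defines $\chi_0(f,c)(n)=2f(n)+c(n)$ and $\chi_1(f,c)(n)=2f(n)+1-c(n)$ and takes $G=\chi_0[H]\cup\chi_1[H]$. The point is that the witness $c$ is coded into the \emph{parity} of every single value rather than into the values on a distinguished set of coordinates, and that \emph{both} $\chi_0(f,c)$ and $\chi_1(f,c)$ are put into $G$: at each $n$ they jointly realize both of $2f(n)$ and $2f(n)+1$, so for an infinite partial $g$ one passes to $\hat g(n)=\lfloor g(n)/2\rfloor$, finds $f\in F$ meeting $\hat g$ infinitely often, and concludes that $g$ meets $\chi_0(f,c)$ or $\chi_1(f,c)$ infinitely often. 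Since $f$ and $c$ are recursively recoverable from any $g\in G$, membership in $G$ is $\Pi^1_1$ by Spector--Gandy, with no recursion, certificates, or wellfoundedness clauses at all.

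Beyond the missing coding, your architecture has an independent problem under the stated hypothesis. You anchor the whole construction to a ``canonical deterministic recursion relative to $T$,'' where $T$ is a Shoenfield tree on $\omega\times\omega_1$. Such a tree is not a real, so ``$g$ arises at some stage of the canonical $T$-recursion'' is not obviously a projective statement, and the hypothesis gives you no $\Sigma^1_2$ wellorder of the reals to make the recursion definable; this is precisely the obstruction that the T\"ornquist-style argument (a pointwise, recursion-free transformation of the uniformized set $H$) is designed to circumvent. The elementary-submodel patch for $\card F=\mathfrak c$ inherits the same difficulty. I would recommend abandoning the transfinite-recursion scaffolding entirely and looking for a value-by-value arithmetic coding in the spirit above.
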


	\begin{proof}
		Define functions $\chi_0, \chi_1 : \bairespace  \times \cantorspace \to \bairespace$
		\begin{align*}
			\chi_0(f,c)(n) &:= 2 f(n) + c(n),\\
			\chi_1(f,c)(n) &:= 2 f(n) + 1 - c(n).
		\end{align*}
		Now, assume that $F$ is a $\Sigma^1_2$ Van Douwen family,
		and let $H \subseteq \bairespace  \times \cantorspace$ be $\Pi^1_1$ such that $F$ is the projection of $H$ to the first component.
		By uniformization (see, for example, 
        \cite[Theorem 4E.4]{moschovakis1980}), we may assume that $H$ is the graph of a partial function.
		Let
		\[
			G := \chi_0[H] \cup \chi_1[H].
		\]
		We claim that $G$ is the desired Van Douwen family.
		So let $f_0 \in F$ and $c_0 := H(f_0)$.
		By construction we have for all $n \in \omega$ 
		\[
			\chi_0(f_0,c_0)(n) \neq \chi_1(f_0,c_0)(n).
		\]
		Similarly, for $f_1 \in F$ with $f_1 \neq f_0$ and $c_1 := H(f_1)$ we may choose $N \in \omega$ such that $f_0(n) \neq f_1(n)$ for all $n > N$.
		But then for all such $n > N$ and $i_0, i_1 \in 2$ we also have
		\[
			\chi_{i_0}(f_0, c_0)(n) \neq \chi_{i_1}(f_1, c_1)(n).
		\]
		Thus, all members of $G$ are eventually different.
		Now, let $g:A \to \omega$ be an infinite partial function.
		Then, we define $\hat{g}:A \to \omega$ by
		\[
			\hat{g}(n) := \lfloor\frac{1}{2}g(n)\rfloor.
		\]
		Since $F$ is Van Douwen, choose $f \in F$ and $B \in \infsubset{A}$ such that $f \restr B = \hat{g} \restr B$.
		Let $c := H(f)$, then for all $n \in B$ there are $i,j \in 2$ with
		\[
			g(n) = 2 \hat{g}(n) + i = 2 f(n) + i = \chi_j(f,c)(n).
		\]
		Thus, either $g =^\infty \chi_0(f,c)$ or $g =^\infty \chi_1(f,c)$ as desired.        
		As for the definability of $G$, we will show that 
		for all $g \in \bairespace$, 
		$$ g \in G \Leftrightarrow
		\exists (f,c) \in \Delta_1^1(g) [ 
		(f,c) \in H \wedge ( \chi_0(f,c) = g \vee \chi_1(f,c) 
		= g)],$$
		which is a $\Pi_1^1$ definition by 
		the Spector-Gandy theorem (see, for example, 
		\cite[Corollary 29.3]{miller1989}). 
		Indeed, given any $g$, we have that 
		$f = \lfloor \frac{g}{2} \rfloor$, where 
		  \[
            \lfloor \frac{g}{2} \rfloor(n) = 
            \begin{cases}
                \frac{g(n)}{2} & \text{if } g(n) \text{ even},\\
                \frac{g(n) - 1}{2} & \text{if }g(n) \text{ odd}.
            \end{cases}
        \]
	    Clearly $g \mapsto \lfloor \frac{g}{2} \rfloor$ is a recursive function. 
        Then we can define the reals
        $c(n) = i$ if and only if $g(n) \mod 2 = i$,
        and $c'(n) = 1-c(n)$. 
        To check whether $(f, c) \in H$ or 
        $(f,c') \in H$ are both $\Pi_1^1$, 
        and checking 
        $\chi_i(f,c) = g$ is Borel for each $i < 2$. 
        This shows the $\Pi_1^1$-definability of $G$ above. 
	\end{proof}

\begin{corollary}
It is consistent with $\mathfrak{c} \geq \aleph_2$ that 
there exists a coanalytic Van Douwen family of size 
$\aleph_1$.
\end{corollary}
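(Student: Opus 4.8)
The plan is to reduce the corollary to the theorem just proved: it suffices to produce a single model in which $\mathfrak{c} \geq \aleph_2$ and there is a (parameter-free) $\Sigma^1_2$ Van Douwen family of size $\aleph_1$, since the theorem then upgrades it to a $\Pi^1_1$ one of the same size. Following the standard template for results of this kind, I would obtain such a model by building a suitably indestructible $\Sigma^1_2$ Van Douwen family in {\sf L} and then adding many Cohen reals.

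First I would work in {\sf L} and carry out Zhang's recursion \cite[Theorem 4.2]{Zhang99} producing a Cohen-indestructible Van Douwen family: a recursion of length $\omega_1$ which, against a bookkept enumeration $\langle \dot{g}_\alpha \mid \alpha < \omega_1 \rangle$ of nice $\mathrm{Add}(\omega,1)$-names for infinite partial functions and against an enumeration of all ground-model infinite partial functions, chooses at stage $\alpha$ a real $f_\alpha$ that is eventually different from $\{f_\beta \mid \beta < \alpha\}$, diagonalizes against the $\alpha$-th ground model partial function, and forces $\card{f_\alpha \intersection \dot{g}_\alpha} = \omega$. Always taking the $<_{\sf L}$-least witness with the (arithmetic, hence absolute) properties demanded, one obtains a parameter-free $\Sigma^1_2$ family $\mathcal{F} = \{ f_\alpha \mid \alpha < \omega_1\}$: as usual, $x \in \mathcal{F}$ if and only if some real codes a well-founded model $M \models \mathrm{ZF}^- + V = {\sf L}$ with $M \models$ ``$\omega_1$ exists and $x \in \mathcal{F}$'', which by condensation correctly computes $\mathcal{F}$ and which is $\Sigma^1_2$ by Spector--Gandy because well-foundedness of the coded relation is $\Pi^1_1$ and the rest is arithmetic (cf.\ \cite{Erdos1981, miller1989, Millhouse24}). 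The point that needs care here --- and which I expect to be the main obstacle --- is that the entire recursion, and in particular the restriction of the Cohen forcing relation to the finitely many arithmetic formulas in play at each stage, is absolute between the countable $L_\gamma$ that occur, so that every such model computes a genuine initial segment of $\mathcal{F}$.

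Finally I would force over {\sf L} with $\mathrm{Add}(\omega, \omega_2)$. In the extension $\mathfrak{c} = \aleph_2$ and $\omega_1$ is preserved, so $\mathcal{F}$ still has size $\aleph_1$; since the $L_\gamma$ for $\gamma < \omega_1$ are unchanged and the displayed formula only ever yields constructible reals, $\mathcal{F}$ is still defined by it, hence still $\Sigma^1_2$. And $\mathcal{F}$ remains Van Douwen: eventual difference among members of $\mathcal{F}$ is absolute, while any name for an infinite partial function is, by the countable chain condition, equivalent to a name living in a countably generated sub-product of $\mathrm{Add}(\omega,\omega_2)$, i.e.\ in a single Cohen extension of {\sf L}, where Cohen-indestructibility supplies some $f \in \mathcal{F}$ with $\card{f \intersection g} = \omega$, a $\Pi^0_2$ fact which persists upward to the full extension. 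Applying the theorem inside this model now produces a $\Pi^1_1$ Van Douwen family of size $\aleph_1$, as required; apart from the absoluteness bookkeeping in the {\sf L}-construction, every step is routine.
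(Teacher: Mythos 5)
Your proposal is correct and follows essentially the same route as the paper: build Zhang's Cohen-indestructible Van Douwen family in {\sf L} along the $\Sigma^1_2$-definable wellorder, add $\aleph_2$-many Cohen reals (using the ccc to reduce any name for an infinite partial function to a countable sub-product), note the family remains Van Douwen and retains its $\Sigma^1_2$ definition by absoluteness, and then invoke the preceding theorem. The extra detail you supply on the condensation/definability bookkeeping is consistent with the paper's one-line appeal to Shoenfield absoluteness.
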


\begin{proof}
    In a model of $V=L$, repeat the construction of \cite{Zhang99} to obtain a $\Sigma_2^1$ Cohen-indestructible 
    Van Douwen family $F$, utilizing an enumeration 
    of the set of nice Cohen names for reals given by the
    $\Sigma_2^1$-definable wellorder $\leq_L$. 
    Let $\mathbb{P}$ be the forcing for adding $\kappa$-many 
    Cohen reals,
    where $\kappa \geq \aleph_2$ is a regular cardinal,
    and let $G$ be $\mathbb{P}$-generic over $V$. 
    By indestructibility, $F$ is still a Van Douwen family 
    in $V[G]$, and by Shoenfield absoluteness $F$ remains
    $\Sigma_2^1$-definable. Therefore Theorem 
    \ref{THM_vandouwen} applies.
\end{proof}

    \section{Eventually different families of permutations}

      In this section we will consider eventually different
     families of permutations, which are eventually different
    families $F \subseteq S_\infty$, where 
    $S_\infty$ denotes the set of permutations (i.e. bijections)
     of $\omega$, and moreover which are maximal as an eventually
    different family of permutations---that is,
    for any 
    $g \in S_\infty$, there is $f \in F$ and 
    infinitely many $n \in \omega$ for which $f(n) = g(n)$.
    The minimal size of a maximal eventually different
    family of permutations is denoted $\mathfrak{a}_p$. 
    It is consistent that $\mathfrak{a}_e = 
    \mathfrak{a}_p = \aleph_1 < \mathfrak{c} = \aleph_2$ and 
    moreover there exist coanalytic witnesses for $\mathfrak{a}_e,
    \mathfrak{a}_p = \aleph_1$;
    see \cite{FischerSwitzer_2023}. We note that it remains 
    open whether $\mathfrak{a}_e = \mathfrak{a}_p$ is a 
    theorem of \textsf{ZFC}. 

    The results of Horowitz and Shelah give maximal 
    eventually different families of permutations can be
    Borel/analytic, so as explained above, we are
    interested in the projective definability of 
    witnesses for $\mathfrak{a}_p$ in models 
    of $\mathfrak{a}_p < \aleph_2 \leq \mathfrak{c}$,
    as these latter families cannot be analytic. 
    Below we show that a coanalytic 
    witness for $\mathfrak{a}_p = \aleph_1$ is equivalent 
    to the existence of 
    a $\Sigma^1_2$ witness for $\a_p = \aleph_1$, and 
    hence $\Pi_1^1$ is the best definability possible in this 
    context.
    The proof will make use of the following easy to prove graph theoretic fact:

    \begin{fact}
        Every bipartite $2$-regular graph decomposes into a disjoint union of cycles of even or infinite length and hence has a perfect matching by picking edges alternatingly.
    \end{fact}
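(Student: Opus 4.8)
The plan is to prove the structural claim first and then read off the matching. Let $\Gamma$ be a connected $2$-regular graph (the general case follows componentwise). Fix a vertex $v_0$; since $\deg(v_0)=2$, pick a neighbour $v_1$, and greedily extend a walk $v_0,v_1,v_2,\dots$ by always leaving each newly visited vertex through the edge other than the one just used. This extension is possible and uniquely determined because every vertex has degree exactly $2$. Either the walk eventually revisits a vertex — and then a short argument shows the first repeated vertex must be $v_0$ itself, since otherwise some intermediate vertex would be incident to three distinct edges, so $\Gamma$ is a finite cycle $C_n$ — or the walk never repeats, in which case performing the same extension in the other direction from $v_0$ as well produces an embedding of the two-way infinite path $(\dots,v_{-1},v_0,v_1,\dots)$ into $\Gamma$; connectedness and $2$-regularity force this to be all of $\Gamma$. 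Thus every connected component of a $2$-regular graph is either a finite cycle or a copy of the bi-infinite path (the ``cycle of infinite length'').

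Next I would invoke bipartiteness. A bipartite graph contains no cycle of odd length, so every finite-cycle component has even length $2k$; the bi-infinite path components are automatically bipartite. This establishes the decomposition into cycles of even or infinite length.

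Finally, the matching: on an even-cycle component $v_0 v_1 \cdots v_{2k-1} v_0$ take the edges $\{v_{2i},v_{2i+1}\}$ for $0 \le i < k$, and on a bi-infinite-path component $(\dots,v_{-1},v_0,v_1,\dots)$ take the edges $\{v_{2i},v_{2i+1}\}$ for $i \in \ZZ$. In each component every vertex is covered exactly once, so the union of these edge sets over all components is a perfect matching of $\Gamma$. The only point requiring care is the bookkeeping in the structural step — verifying that the greedy walk in a $2$-regular graph either closes up exactly at its starting vertex or extends to a full two-way infinite path, and that no other component shape (such as a one-way infinite ray, whose endpoint would have degree $1$) can occur; everything after that is immediate.
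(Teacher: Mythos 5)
Your proof is correct, and it is the standard argument; the paper itself offers no proof of this fact at all (it is simply asserted as ``easy to prove'' and then used in Lemma \ref{LEM_2to1Bijection}), so there is nothing to compare against. One small point worth attending to: in the application the graph $H$ is explicitly allowed to have multi-edges (two parallel edges between $L_n$ and $R_{f(n)}$ occur whenever $f(2n)=f(2n+1)$), so your walk argument should be phrased in terms of leaving a vertex by the \emph{edge} other than the one just used, rather than by the other \emph{neighbour}; with that reading a component may be a $2$-cycle (two vertices joined by a pair of parallel edges), which is still a cycle of even length and still admits a matching by taking one of the two edges, so your conclusion goes through unchanged.
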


	\begin{lemma}\label{LEM_2to1Bijection}
		Assume $f:\omega \to \omega$ is $2$-to-$1$, i.e.\ every $n \in \omega$ has exactly two preimages.
		Then there is a function $i:\omega \to 2$ such that the function $g:\omega \to \omega$ defined by $g(n) := f(2n + i(n))$ is a bijection.
	\end{lemma}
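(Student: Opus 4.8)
The plan is to encode the desired choice of $i$ as a perfect matching in an auxiliary bipartite $2$-regular graph, to which the preceding Fact applies. Define a bipartite multigraph $B$ as follows: on one side take a vertex $p_n$ for each $n \in \omega$ (thought of as the pair $\{2n, 2n+1\}$), and on the other side a vertex $q_m$ for each $m \in \omega$ (thought of as the fiber $f^{-1}(m)$; note every $m$ lies in the range of $f$ since $f$ is $2$-to-$1$). For each $k \in \omega$ put an edge $e_k$ joining $p_n$ to $q_m$, where $n$ is the unique index with $k \in \{2n, 2n+1\}$ and $m := f(k)$. When $f(2n) = f(2n+1)$ this yields two parallel edges between $p_n$ and the common fiber vertex; we allow such multi-edges.

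First I would verify that $B$ is $2$-regular: the vertex $p_n$ is incident exactly to $e_{2n}$ and $e_{2n+1}$, while the vertex $q_m$ is incident exactly to the edges $e_k$ with $f(k) = m$, of which there are precisely two by hypothesis. By the Fact, $B$ decomposes into disjoint even and infinite cycles and hence has a perfect matching $M$. Put $S := \{k \in \omega : e_k \in M\}$. Since $p_n$ meets $M$ in exactly one edge, exactly one of $2n$, $2n+1$ lies in $S$; define $i(n) \in 2$ by the requirement $2n + i(n) \in S$, and set $g(n) := f(2n + i(n))$.

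It then remains to check that $g$ is a bijection. For surjectivity, given $m \in \omega$ the vertex $q_m$ meets $M$ in a single edge $e_k$, so $k \in S \cap f^{-1}(m)$; writing $k = 2n + i(n)$ gives $g(n) = m$. For injectivity, if $g(n) = g(n') = m$ then $2n + i(n)$ and $2n' + i(n')$ both lie in $S \cap f^{-1}(m)$, which is a singleton because $q_m$ is covered by exactly one matching edge; hence $2n + i(n) = 2n' + i(n')$, and therefore $n = n'$.

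I do not anticipate a serious obstacle here; the only point requiring a little care is the multi-edge case $f(2n) = f(2n+1)$, where $p_n$ and $q_m$ are joined by a $2$-cycle — which has even length, so it is consistent with the Fact — and the matching simply selects one of the two parallel edges, with the remaining bookkeeping going through unchanged. (If one prefers to apply the Fact to genuine graphs only, one can alternatively remove every such pair $p_n$ together with its fiber vertex $q_m$ beforehand, assign $i(n)$ arbitrarily there, and observe that what remains is still a bipartite $2$-regular graph.)
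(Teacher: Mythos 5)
Your proposal is correct and follows essentially the same route as the paper: the same auxiliary bipartite $2$-regular multigraph (pairs $\{2n,2n+1\}$ on one side, fibers $f^{-1}(m)$ on the other, with edge $e_k$ joining $p_{\lfloor k/2\rfloor}$ to $q_{f(k)}$), the same appeal to the Fact for a perfect matching, and the same definition of $i$ from the matched edge at each $p_n$. Your verification of bijectivity and your handling of the parallel-edge case are in fact spelled out more carefully than in the paper's own proof.
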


	\begin{proof}

        Consider the following bipartite graph $H$ (with possible multi-edges):
        \begin{enumerate}[(i)]
            \item We have countably many left $\set{L_n}{n \in \omega}$ and right $\set{R_n}{n \in \omega}$ nodes,
            \item For each $n \in \omega$ we have an edge $e_n$ between $L_{\lfloor\frac{n}{2}\rfloor}$ and $R_{f(n)}$.
        \end{enumerate}
        
        By construction, every $L_n$ has degree $2$.
        As $f$ is $2$-to-$1$ the same holds for the $R_n$, i.e.\ $H$ is $2$-regular.
        Hence, $H$ has a perfect matching $P$ by the fact above.
        Now, we define for $n \in \omega$
        \[
            i(n) := 
            \begin{cases}
                0 & \text{if } e_{2n} \in P,\\
                1 & \text{if } e_{2n + 1} \in P.
            \end{cases}
        \]
        Note, for every $n \in \omega$ that $e_{2n}$ and $e_{2n+1}$ are the only edges incident to $L_n$.
        Thus, exactly one of these cases above occurs as we have a perfect matching.
        It is also easy to see that $g$ will then be bijective: If $g$ was not injective, then $P$ would not be a matching, and if $g$ was not surjective, then $P$ would not be perfect.
    \end{proof}

	\begin{theorem} \label{THM_CoanalyticMEDP}
		If there is a $\Sigma^1_2$ maximal eventually different family of permutations, then there is a $\Pi^1_1$ maximal eventually different family of permutations of the same size.
	\end{theorem}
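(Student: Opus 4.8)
The plan is to reuse the architecture of the Van Douwen proof above: fix a $\Pi^1_1$ set $H \subseteq \bairespace \times \cantorspace$ projecting onto $F$ in the first coordinate, and by uniformization take $H$ to be the graph of a partial function with domain $F$; then code the witnessing real $c = H(f)$ into the values of a permutation and read it back off coanalytically. The obstruction to copying the Van Douwen coding verbatim is that the members of the new family must be \emph{permutations}: the map $n \mapsto 2f(n)+c(n)$ is never onto, since it omits exactly one of $2m,2m+1$ for each $m$. I will fix this by spreading the code over consecutive pairs of coordinates. For $f \in \bairespace$ and $c \in \cantorspace$ define $\chi_0,\chi_1 : \bairespace \times \cantorspace \to \bairespace$ by
\begin{align*}
\chi_0(f,c)(2k) &:= 2f(k)+c(k), & \chi_0(f,c)(2k+1) &:= 2f(k)+1-c(k),\\
\chi_1(f,c)(2k) &:= 2f(k)+1-c(k), & \chi_1(f,c)(2k+1) &:= 2f(k)+c(k).
\end{align*}
The elementary facts to check by direct computation are: (a) if $f \in S_\infty$ then $\chi_0(f,c),\chi_1(f,c) \in S_\infty$, because $n \mapsto \lfloor \chi_i(f,c)(n)/2 \rfloor = f(\lfloor n/2 \rfloor)$ is $2$-to-$1$ while $\{\chi_i(f,c)(2k),\chi_i(f,c)(2k+1)\} = \{2f(k),2f(k)+1\}$ for each $k$; (b) $\chi_0(f,c)(n) \neq \chi_1(f,c)(n)$ for \emph{every} $n$; and (c) from $g := \chi_i(f,c)$ one recovers, recursively in $g$, both $f$ (as $f(k) = \lfloor g(2k)/2 \rfloor$) and $c$ (as $g(2k) \bmod 2$ if $i = 0$, as $1-(g(2k)\bmod 2)$ if $i = 1$).

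Set $G := \chi_0[H] \cup \chi_1[H]$. Using (b), together with the fact that distinct $f_0,f_1 \in F$ are eventually different permutations — so $\lfloor \chi_{i_0}(f_0,c_0)(n)/2\rfloor = f_0(\lfloor n/2\rfloor)$ and $\lfloor \chi_{i_1}(f_1,c_1)(n)/2\rfloor = f_1(\lfloor n/2\rfloor)$ eventually differ — one checks routinely that $G$ is a family of permutations that is pairwise eventually different. For the $\Pi^1_1$-definability of $G$ we mirror the Van Douwen argument using (c): for $g \in S_\infty$, letting $f := \lfloor g(2\cdot)/2 \rfloor$, $c := g(2\cdot)\bmod 2$ and $c' := 1-c$ (all recursive in $g$), we have $g \in G$ if and only if either $(f,c) \in H$ and $\chi_0(f,c) = g$, or $(f,c') \in H$ and $\chi_1(f,c') = g$; membership in $H$ is $\Pi^1_1$ and the remaining clauses arithmetical (or, if one prefers, present the recovered reals as quantified $\Delta^1_1(g)$ reals and invoke Spector-Gandy as before).

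The crux is maximality, and this is where Lemma~\ref{LEM_2to1Bijection} enters. Given an arbitrary $g \in S_\infty$, the function $\hat g(n) := \lfloor g(n)/2 \rfloor$ is $2$-to-$1$ — its two preimages of $m$ are $g^{-1}(2m)$ and $g^{-1}(2m+1)$ — so the lemma supplies $i:\omega \to 2$ with $f^* := \bigl(n \mapsto \hat g(2n+i(n))\bigr) \in S_\infty$. Maximality of $F$ gives $f \in F$ agreeing with $f^*$ on an infinite set $I$; put $c := H(f)$. For $n \in I$ we get $\lfloor g(2n+i(n))/2\rfloor = f^*(n) = f(n)$, hence $g(2n+i(n)) \in \{2f(n),2f(n)+1\}$; but $\{\chi_0(f,c)(2n+i(n)),\chi_1(f,c)(2n+i(n))\}$ is precisely this two-element set, so $g$ agrees with one of $\chi_0(f,c),\chi_1(f,c)$ at coordinate $2n+i(n)$. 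Since $n \mapsto 2n+i(n)$ is injective and $I$ infinite, a fixed one of $\chi_0(f,c),\chi_1(f,c) \in G$ agrees with $g$ infinitely often. I expect this last step to be the only genuinely delicate point: it is where the $2$-to-$1$/bijection duality of Lemma~\ref{LEM_2to1Bijection} must be used to ``re-thin'' $\hat g$ compatibly with the pair-coding so that infinitely many agreements survive; everything else runs parallel to the Van Douwen proof. Finally, (c) shows $\chi_0,\chi_1$ are injective on $H$ with $\chi_0[H]\cap\chi_1[H] = \emptyset$, so $\card{G} = 2\card{H} = \card{F}$, yielding a $\Pi^1_1$ maximal eventually different family of permutations of the same size.
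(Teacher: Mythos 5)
Your proposal is correct and follows essentially the same route as the paper's proof: the same pair-coding maps $\chi_0,\chi_1$ spreading the bit $c(k)$ over the coordinates $2k,2k+1$, the same use of Lemma~\ref{LEM_2to1Bijection} applied to the $2$-to-$1$ map $n \mapsto \lfloor g(n)/2\rfloor$ to produce a bijection to which maximality of $F$ can be applied, and the same Spector--Gandy argument for coanalyticity. The only differences are cosmetic (your $g$/$f^*$ versus the paper's $\hat g$/$g$), and your added remarks on recoverability and cardinality are consistent with what the paper leaves implicit.
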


	\begin{proof}
		Define functions $\chi_0, \chi_1 :S_\infty  \times \cantorspace \to S_\infty$ for $n \in \omega$ by
		\begin{align*}
			\chi_0(f,c)(2n) &:= 2 f(n) + c(n),\\
            \chi_0(f,c)(2n + 1) &:= 2 f(n) + 1 - c(n),\\[5pt]
            \chi_1(f,c)(2n) &:= 2 f(n) + 1 - c(n),\\
            \chi_1(f,c)(2n + 1) &:= 2 f(n) + c(n).
		\end{align*}
		Now, assume that $F$ is a $\Sigma^1_2$ maximal family of permutations.
		Further let $H \subseteq \bairespace  \times \cantorspace$ be $\Pi^1_1$ such that $F$ is the projection of $H$ to the first component.
		By uniformization we may assume that $H$ is the graph of a partial function.
		Let
		\[
			\hat{F} := \chi_0[H] \cup \chi_1[H].
		\]
		We claim that $\hat{F}$ is the desired maximal eventually different family of permutations.
		Let $f \in S_\infty$, $c \in \cantorspace$ and $i \in 2$.
		First we show that $\chi_i(f, c) \in S_\infty$.
		Assume for $n_0, n_1 \in \omega$ and $j_0, j_1 \in 2$ we have
		\[
			\chi_i(f, c)(2n_0 + j_0) = \chi_i(f,c)(2n_1 + j_1)
		\]
		By construction, this implies that $f(n_0) = f(n_1)$.
		But $f$ is injective, so also $n_0 = n_1$.
		Again, by construction of $\chi_i(f,c)$ we also obtain $j_0 = j_1$, i.e.\ $\chi_i(f,c)$ is injective.
		For surjectivity, let $m \in \omega$ and $j \in 2$.
		Since $f$ is surjective, choose $n$ such that $f(n) = m$.
		But then either
		\[
			\chi_i(f,c)(2n) = 2 f(n) + j = 2m + j
		\]
		or we have
		\[
			\chi_i(f,c)(2n + 1) = 2 f(n) + j = 2m + j.
		\]
		Hence $\chi_i(f,c)$ is surjective.
		Next, let $f_0 \in F$ and $c_0 := H(f_0)$.
		By construction we have for all $n \in \omega$
		\[
			\chi_0(f_0, c_0)(n) \neq \chi_1(f_0, c_0)(n).
		\]
		Similarly, for $f_1 \in F$ with $f_1 \neq f_0$ and $c_1 := H(f_1)$ we may choose $N \in \omega$ such that $f_0(n) \neq f_1(n)$ for all $n > N$.
		But then for all such $n > N$ and $i_0, i_1 \in 2$ we also have
		\[
			\chi_{i_0}(f_0, c_0)(n) \neq \chi_{i_1}(f_1, c_1)(n).
		\]
		Thus, all members of $\hat{F}$ are eventually different.
		Now, towards maximality of $\hat{F}$ let $\hat{g} \in S_\infty$.
		Choose a function $i:\omega \to 2$ such that $g:\omega \to \omega$ defined by
		\[
			g(n) := \lfloor\frac{\hat{g}(2n + i(n))}{2}\rfloor
		\]
		is a bijection.
		This is possible by the previous lemma as the function $\lfloor\frac{\hat{g}(n)}{2}\rfloor$ is $2$-to-$1$.
		By maximality of $F$, we may choose $f \in F$ and $A \in \infsubset{\omega}$ such that $g \restr A = f\restr A$.
		Let $c := H(f)$ and $n \in A$.
		Then there are $j, k\in 2$ so that
		\[
			\hat{g}(2n + i(n)) = 2g(n) + j = 2f(n) + j = \chi_k(f,c)(2n + i(n)).
		\]
		Thus, either $\hat{g} =^\infty \chi_0(f,c)$ or $\hat{g} =^\infty \chi_1(f,c)$ as desired.
        As before Spector-Gandy shows that $\hat{F}$ is $\Pi^1_1$ as for fixed $i \in 2$ we can compute $(f,c)$ from $\chi_i(f,c)$.
	\end{proof}

    Before we move on to maximal cofinitary groups, we discuss the minimal complexity of a maximal family of permutations.
    For $\a_e$, Schrittesser \cite{Schrittesser_2017} showed that there is a $\Pi^0_1$, i.e.\ a closed maximal eventually different family.
    Similarly, for $\a_g$ (see the following section), Mejak and Schrittesser showed that there is a $\Pi^0_1$ set freely generating a maximal cofinitary group.
    Thus, the whole group has complexity $\Sigma^0_2$.
    Moreover, this group is not only maximal as a cofinitary group, but also maximal as an eventually different family of permutations; see \cite[Proposition 2.13]{MejakSchrittesser_2022}.
    Hence, they also proved that there is a $\Sigma^0_2$ witness for $\a_p$, however to the knowledge of the authors it is not known what the minimal complexity for a maximal eventually different family of permutations is.

    \begin{question}
        Is there a $\Pi^0_1$ maximal eventually different family of permutations?
    \end{question}

    In the same way Schrittesser obtained a $\Pi^0_1$ maximal eventually different family, one might assume that our proof above may be used to obtain an analogous statement to \cite[Lemma 4.1]{Schrittesser_2017}.
    However, with our coding above we only get the following:

    \begin{lemma}
        Let $0 < \xi < \omega_1$.
        If there is a $\Pi^0_{\xi + 2}$ maximal eventually different family of permutations, then there is a $\Pi^0_{\xi + 1}$ maximal eventually different family of permutations.
    \end{lemma}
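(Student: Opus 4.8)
The plan is to run the proof of Theorem~\ref{THM_CoanalyticMEDP} essentially verbatim, the only change being how the auxiliary set $H$ is produced. In that proof $H\subseteq\bairespace\times\cantorspace$ was a $\Pi^1_1$ set with first projection $F$, obtained from the $\Sigma^1_2$-hypothesis and then uniformized; here I will build, directly from the $\Pi^0_{\xi+2}$-hypothesis, a $\Pi^0_{\xi+1}$ set $H$ which is already the graph of a partial function with domain $F$. Since $\xi>0$, every $\Sigma^0_{\xi+1}$ set is a countable union of $\Pi^0_\xi$ sets, so I may fix sets $F_{n,m}\in\Pi^0_\xi$ with $F=\bigcap_n\bigcup_m F_{n,m}$. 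For $f\in F$ and $n\in\omega$ put $w_n(f):=\min\{m : f\in F_{n,m}\}$ and encode the sequence $(w_n(f))_n$ as the real
\[
c_f := \{\langle n,w_n(f)\rangle : n\in\omega\}\in\cantorspace,
\]
where $\cantorspace$ is identified with $\mathcal{P}(\omega)$ via a fixed recursive bijection $\langle\cdot,\cdot\rangle:\omega\times\omega\to\omega$; then set $H:=\{(f,c_f) : f\in F\}$, which by construction is the graph of a partial function with domain $F$.

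The heart of the matter is that $H$ is $\Pi^0_{\xi+1}$. Writing $\mathrm{col}_n(c):=\{m : \langle n,m\rangle\in c\}$, membership $(f,c)\in H$ is the conjunction over all $n$ of the assertion: $\mathrm{col}_n(c)$ is a singleton $\{w\}$, and $f\in F_{n,w}$, and $f\notin F_{n,m}$ for all $m<w$. For fixed $n$ this set equals $\bigcup_w\bigl(C_w^{(n)}\times D_w^{(n)}\bigr)$, where $C_w^{(n)}:=\{c : \mathrm{col}_n(c)=\{w\}\}$ is clopen with the $C_w^{(n)}$ pairwise disjoint, and $D_w^{(n)}:=F_{n,w}\cap\bigcap_{m<w}(\bairespace\setminus F_{n,m})$ is the intersection of a $\Pi^0_\xi$ set with a $\Sigma^0_\xi$ set, hence lies in $\Pi^0_{\xi+1}$. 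Such a clopen-indexed union can be rewritten as $\bigcap_w\bigl((\bairespace\times(\cantorspace\setminus C_w^{(n)}))\cup(D_w^{(n)}\times\cantorspace)\bigr)$ intersected with the $\Pi^0_2$ set $\{(f,c) : \mathrm{col}_n(c)\text{ is a singleton}\}$, and this is $\Pi^0_{\xi+1}$ since $\xi+1\geq 2$. Intersecting over $n$ keeps us in $\Pi^0_{\xi+1}$.

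Now define $\chi_0,\chi_1:S_\infty\times\cantorspace\to S_\infty$ exactly as in Theorem~\ref{THM_CoanalyticMEDP} and let $\hat F:=\chi_0[H]\cup\chi_1[H]$. That $\hat F$ is a maximal eventually different family of permutations is proved word for word as in Theorem~\ref{THM_CoanalyticMEDP}: the only properties of $H$ used there are that its first projection $F$ is such a family and that $H$ is a graph — the latter being precisely what rules out two distinct members of $\hat F$ coming from the same $f$ (and hence agreeing infinitely often). For the complexity, recall from that proof that for each $i<2$ one can recursively recover $(f,c)$ from $\chi_i(f,c)$, i.e.\ there is a total recursive $\phi_i:\bairespace\to\bairespace\times\cantorspace$ with $\chi_i(\phi_i(g))=g$ for all $g\in\ran(\chi_i)$; moreover $\ran(\chi_i)=S_\infty\cap C_i$ for a closed $C_i$ (a countable conjunction of clopen conditions on $g$), so $\ran(\chi_i)$ is $\Pi^0_2$. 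Since $\chi_i$ is injective,
\[
\hat F = \bigcup_{i<2}\bigl(\ran(\chi_i)\cap\phi_i^{-1}(H)\bigr),
\]
a finite union of intersections of a $\Pi^0_2$ set with the continuous preimage of the $\Pi^0_{\xi+1}$ set $H$, so $\hat F$ is $\Pi^0_{\xi+1}$ (again using $\xi+1\geq 2$).

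The one step requiring care is the complexity computation for $H$, and it rests on the bookkeeping principle that for $\xi\geq 1$ the class $\Pi^0_{\xi+1}$ contains all clopen, $\Sigma^0_\xi$ and $\Pi^0_2$ sets, is closed under countable intersections, and is closed under clopen-indexed unions $\bigcup_m C_m\times A_m$ with $(C_m)_m$ a clopen partition and $A_m\in\Pi^0_{\xi+1}$. This is exactly where $\xi>0$ is used, and it explains why only one level is gained: for $\xi=0$ the $F_{n,m}$ are merely clopen and the same computation lands back in $\Pi^0_2$. A secondary point worth flagging is that the coding space is forced to be $\cantorspace$ rather than $\bairespace$, since otherwise $\chi_i(f,c)$ need not be a permutation; this is why the natural-number witnesses $w_n(f)$ must be packed into a single binary real, which is the origin of the harmless extra bookkeeping that each $\mathrm{col}_n(c)$ be a singleton.
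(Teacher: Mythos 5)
Your proof is correct and is exactly the adaptation of Schrittesser's Lemma~4.1 via the $\chi_i$-coding that the paper's one-line proof refers to: decompose the $\Pi^0_{\xi+2}$ family as $\bigcap_n\bigcup_m F_{n,m}$ with $F_{n,m}\in\Pi^0_\xi$, code the least-witness sequence into the Cantor-space parameter, and rerun the combinatorics of Theorem~\ref{THM_CoanalyticMEDP}. The only slip is that $C^{(n)}_w=\{c:\mathrm{col}_n(c)=\{w\}\}$ is closed rather than clopen, but this is harmless since your rewriting only uses that its complement is open and that the singleton condition is $\Pi^0_2$, both of which land in $\Pi^0_{\xi+1}$ precisely because $\xi\geq 1$.
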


    \begin{proof}
        Adapt \cite[Lemma 4.1]{Schrittesser_2017} using the coding above.
    \end{proof}

    Note the extra assumption of $0 < \xi$, so with this lemma we can only obtain a $\Pi^0_2$ witness for $\a_p$.
    Essentially, this is due to the fact we are coding elements
    of $2^{\omega}$ instead of $\omega^\omega$, as done in 
    \cite{Schrittesser_2017}.
    Hence, in order to define the maximal eventually different family of permutations of lower complexity with the approach
    above, one needs to express that the sequence of (non-)flips encodes a sequence of natural numbers.
    However, to this end we need to require that the sequence of (non-)flips is not eventually constant.
    But this is a $\Pi^0_2$ statement, thus requiring the extra assumption.

  \section{Cofinitary groups}

    Next, we will consider maximal cofinitary groups.
    If an eventually different family of permutations $G$ is also a group with respect to concatenation, then we call it a \textit{cofinitary group}.
    Equivalently, every element of $G$ is either the identity or only has finitely many fix-points.
    $G$ is \textit{maximal} if its maximal with respect to inclusion among all cofinitary groups.
    The minimal cardinality of a maximal cofinitary group is uncountable and denoted with $\a_g$; again no known relations or the absence thereof between $\a_e, \a_p$ and $\a_g$ are known.
    
    In terms of definability it is often easier to obtain a definable generating set for a cofinitary group.
    We say that $F$ \textit{generates} $G$ if $\langle F \rangle = G$, where $\langle F \rangle$ is the group generated by $F$.
    If there are no relations among the generators in $F$, we say that $F$ \textit{freely generates} $G$.
    For example, it was first shown by Gao and Zhang \cite{GaoZhang_2008} that in {\sf L} there is a $\Pi^1_1$ generating set for a maximal cofinitary group, and shortly
    thereafter Kastermans \cite{Kastermans_2009} showed that indeed the entire group can be $\Pi^1_1$.
    Horowitz and Shelah \cite{HorowitzShelah_2025} then showed that there always is a Borel maximal cofinitary group.
    More recently, Fischer, Schrittesser and the second author \cite{FischerSchembeckerSchrittesser_2023} showed that in {\sf L} there is a $\Pi^1_1$ cofinitary group which is indestructible by various different tree forcings preserving $\non(\M)$.
    They employed an intricate coding argument, where information is coded into the lengths orbits and into the amount of orbits of a certain length.

    Here, we use a simpler coding technique to obtain a result for cofinitary groups similar to the previous sections.
    However, with our methods we can only get a result for generating sets of cofinitary groups and we need to additionally assume that the maximal cofinitary group is freely generated and also maximal as an eventually different family of permutations.
    This seems like a strong extra assumption, but indeed most cofinitary groups constructed in the natural way satisfy these assumptions.
    In particular, the next theorem provides a different way to show that many {\sf L}-extensions have a coanalytic generating set for a maximal cofinitary group. 

    \begin{theorem}\label{THM_CoanalyticMCG}
        If there is a $\Sigma^1_2$ family freely generating a maximal cofinitary group, which is also maximal as an eventually different family of permutations, then there is a $\Pi^1_1$ family generating a maximal cofinitary group, which is also maximal as an eventually different family of permutations.
    \end{theorem}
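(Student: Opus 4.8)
The plan is to recycle the doubling map $\chi_0$ from the proof of Theorem~\ref{THM_CoanalyticMEDP}, but now to adjoin to each generator only \emph{one} of its two lifts rather than both, and to make up for the lost slack by throwing in a single fixed extra permutation. Set $\pi\colon\omega\to\omega$, $\pi(k)=\lfloor k/2\rfloor$, and call $\tilde h\in S_\infty$ a \emph{lift} of $h\in S_\infty$ if $\pi\circ\tilde h=h\circ\pi$. Then the lifts of all permutations form a subgroup $\hat S\leq S_\infty$, the map $\hat S\to S_\infty$ sending a lift to the permutation it lifts is a surjective homomorphism whose kernel $K$ consists of the lifts of $\id$, each $\chi_0(f,c)$ is a lift of $f$, and the fixed-point-free involution $\tau$ with $\tau(2n)=2n+1$ lies in $K$ and is central in $\hat S$ (conjugating the transposition of a pair $\{2n,2n+1\}$ by any lift is again the transposition of a pair). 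I will also use the trivial observation that a lift of a cofinitary permutation is cofinitary, since $\pi$ maps a fixed point of $\tilde h$ to a fixed point of $h$. Writing $F$ as the projection of a $\Pi^1_1$ graph $H\subseteq\bairespace\times\cantorspace$ of a partial function (uniformization, as in the earlier proofs) and $c_f:=H(f)$, I then take
\[
	\hat F:=\chi_0[H]\cup\{\tau\};
\]
note $\langle\hat F\rangle=\langle\chi_0[H]\cup\chi_1[H]\rangle$ since $\chi_1(f,c)=\chi_0(f,c)\circ\tau$, so adding $\tau$ is exactly what restores the ``both lifts of each generator'' situation.

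The key step --- and the place where the hypothesis of \emph{free} generation is indispensable --- is to show $\langle\hat F\rangle$ is a cofinitary group. Distinct $f\in F$ give distinct $\chi_0(f,c_f)$, and the projection $\chi_0(f,c_f)\mapsto f$ sends reduced words to reduced words, so free generation of $G:=\langle F\rangle$ by $F$ forces $\langle\chi_0[H]\rangle\to G$ to be an isomorphism; in particular $\langle\chi_0[H]\rangle\cap K=\{\id\}$. Since $\tau$ is central, every element of $\langle\hat F\rangle$ is $\tau^{\varepsilon}w$ with $w\in\langle\chi_0[H]\rangle$, $\varepsilon\in2$, and such an element is in $K$ iff $w=\id$; hence $\langle\hat F\rangle\cap K=\{\id,\tau\}$, and the projection restricts to a surjection $\langle\hat F\rangle\to G$ with kernel $\langle\tau\rangle\cong\ZZ/2$. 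Consequently a non-identity element of $\langle\hat F\rangle$ is either $\tau$ (no fixed points) or a lift of a non-identity element of $G$ (cofinitary, as $G$ is cofinitary), so it is cofinitary; thus $\langle\hat F\rangle$ is a cofinitary group. Without free generation this fails: $\langle\chi_0[H]\rangle$ could then contain elements of $K$ that swap only some of the pairs $\{2n,2n+1\}$, and those are not cofinitary.

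For maximality of $\langle\hat F\rangle$ as an eventually different family of permutations I will mimic the corresponding part of Theorem~\ref{THM_CoanalyticMEDP}, but invoking maximality of $G$ --- not of $F$ --- as such a family: given $\hat g\in S_\infty$, the map $n\mapsto\lfloor\hat g(n)/2\rfloor$ is $2$-to-$1$, so Lemma~\ref{LEM_2to1Bijection} gives $i\colon\omega\to2$ with $g(n):=\lfloor\hat g(2n+i(n))/2\rfloor$ a bijection; picking $\phi\in G$ and an infinite $A$ with $g\restr A=\phi\restr A$, and a lift $\tilde\phi\in\langle\hat F\rangle$ of $\phi$ (one exists by surjectivity), I observe that for $n\in A$ the value $\hat g(2n+i(n))$ is one of the two elements of $\{2\phi(n),2\phi(n)+1\}$, which are precisely $\tilde\phi(2n+i(n))$ and $(\tilde\phi\tau)(2n+i(n))$; by pigeonhole $\hat g$ then agrees with one of $\tilde\phi,\tilde\phi\tau\in\langle\hat F\rangle$ at infinitely many points. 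Since $\langle\hat F\rangle$ is a cofinitary group, maximality as an eventually different family also makes it a maximal cofinitary group. Finally, $\hat F$ is $\Pi^1_1$: from $x\in\chi_0[H]$ one reads off $f(n)=\lfloor x(2n)/2\rfloor$ and $c(n)=x(2n)\bmod2$ recursively, so $x\in\chi_0[H]\iff(f,c)\in H\wedge\chi_0(f,c)=x$ is a $\Pi^1_1$ condition (here, unlike in the earlier proofs, Spector--Gandy is not even needed), and adding the one point $\tau\notin\chi_0[H]$ (which holds since $\id\notin F$) keeps the family $\Pi^1_1$; clearly $|\hat F|=|F|$.

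The main obstacle is genuinely the cofinitary-group claim, i.e.\ the computation $\langle\hat F\rangle\cap K=\{\id,\tau\}$ --- everything else is a routine adaptation of Theorem~\ref{THM_CoanalyticMEDP}. The conceptual content is that free generation is exactly what stops the ``error group'' $K$ from contributing non-cofinitary permutations, while one central involution $\tau$ suffices to recover the flexibility the maximality argument needs; I do not expect to recover free generation for $\hat F$ (indeed $\tau^2=\id$), but the theorem only asks for a generating set.
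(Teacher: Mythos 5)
Your proof is correct and follows essentially the same route as the paper: the block partition $\{2n,2n+1\}$, the induced surjection onto $\Gamma$ whose kernel is $\{\id,\tau\}$ by free generation, cofinitariness by lifting fixed points, and maximality via Lemma~\ref{LEM_2to1Bijection}. The only (harmless) differences are that you take $\chi_0[H]\cup\{\tau\}$ rather than $\chi_0[H]\cup\chi_1[H]$ as the generating family --- these generate the same group --- and you correctly observe that Spector--Gandy is not actually needed for the definability since $(f,c)$ is recursively recoverable from $\chi_0(f,c)$.
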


    \begin{proof}\renewcommand{\qedsymbol}{}
        As in Theorem~\ref{THM_CoanalyticMEDP} define $\chi_0, \chi_1 :S_\infty  \times \cantorspace \to S_\infty$ for $n \in \omega$ by
		\begin{align*}
			\chi_0(f,c)(2n) &:= 2 f(n) + c(n),\\
            \chi_0(f,c)(2n + 1) &:= 2 f(n) + 1 - c(n),\\[5pt]
            \chi_1(f,c)(2n) &:= 2 f(n) + 1 - c(n),\\
            \chi_1(f,c)(2n + 1) &:= 2 f(n) + c(n).
		\end{align*}
        This time, assume that $F$ is a $\Sigma^1_2$ set freely generating the maximal cofinitary group $\Gamma := \langle{ F \rangle}$.
		Further let $H \subseteq \bairespace  \times \cantorspace$ be $\Pi^1_1$ such that $F$ is the projection of $H$ to the first component.
		By uniformization we may assume that $H$ is the graph of a partial function.
		Let
		\[
			\hat{F} := \chi_0[H] \cup \chi_1[H].
		\]
        By the arguments in the previous section, $\hat{F}$ is a family of permutations and is $\Pi^1_1$.
        It remains to show that for $\hat{\Gamma} := \langle \hat{F}\rangle$ we have
        \begin{enumerate}
            \item $\hat{\Gamma}$ is cofinitary,
            \item $\hat{\Gamma}$ is maximal as an eventually different family of permutations.
        \end{enumerate}
        From now on we consider the partition of $\omega$ given by the pairs $B_n := \simpleset{2n, 2n+1}$ for $n \in \omega$.
        We need the following lemmata:
    \end{proof}

    \begin{lemma}\label{LEM_Homomorphism}
        For every $\hat{g} \in \hat{\Gamma}$ there is a unique $g \in \Gamma$ such that for all $n \in \omega$
        \[
            \hat{g}[B_n] = B_{g(n)}.
        \]
        Moreover, the assignment $\Psi:\hat{\Gamma} \to \Gamma$ given by $\hat{g} \mapsto g$ is a surjective group homomorphism with
        $\Psi(\chi_i(f, H(f))) = f$ for all $f \in F$ and $i \in 2$.
    \end{lemma}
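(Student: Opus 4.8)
The plan is to exhibit $\Psi$ as a restriction of the natural ``quotient by blocks'' homomorphism. First I would record that each generator of $\hat{\Gamma}$ respects the partition $\{B_n : n \in \omega\}$: for $f \in F$, $c = H(f)$ and $i \in 2$, the definition of $\chi_i$ gives $\chi_i(f,c)[B_n] = \{2f(n)+c(n),\, 2f(n)+1-c(n)\} = \{2f(n), 2f(n)+1\} = B_{f(n)}$, and the analogous computation for $\chi_i(f,c)^{-1}$ shows it maps $B_m$ onto $B_{f^{-1}(m)}$.

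Next I would introduce the block stabilizer $\mathcal{B} \le S_\infty$, consisting of all $\hat{h} \in S_\infty$ for which $\hat{h}[B_n]$ is again a block for every $n$, together with the map $\pi : \mathcal{B} \to S_\infty$ sending $\hat{h}$ to the map $n \mapsto m$ determined by $\hat{h}[B_n] = B_m$. Since the blocks are pairwise disjoint and nonempty, this $m$ is unique, so $\pi(\hat{h})$ is well defined; it is itself a bijection of $\omega$ (injectivity and surjectivity of $\pi(\hat{h})$ follow from those of $\hat{h}$), and it is the only permutation of $\omega$ with $\hat{h}[B_n] = B_{\pi(\hat{h})(n)}$ for all $n$, which will yield the uniqueness clause. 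It is then routine that $\mathcal{B}$ is closed under composition and inversion (closure under inversion uses surjectivity of $\pi(\hat{h})$ so that the images $\hat{h}[B_n]$ exhaust all blocks) and that $\pi$ is a group homomorphism, with the composition convention of the paper.

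Now, by the first step $\hat{F} \subseteq \mathcal{B}$ and $\pi(\chi_i(f, H(f))) = f$, so $\hat{\Gamma} = \langle \hat{F} \rangle \le \mathcal{B}$ and every $\hat{g} \in \hat{\Gamma}$ respects the blocks. Let $\Psi$ be the restriction of $\pi$ to $\hat{\Gamma}$; then for $\hat{g} \in \hat{\Gamma}$ the permutation $g := \Psi(\hat{g})$ is the desired (and, by the above, unique) element with $\hat{g}[B_n] = B_{g(n)}$, and $\Psi$ is a group homomorphism as a restriction of $\pi$. Finally, $\Psi[\hat{\Gamma}]$ is the subgroup of $S_\infty$ generated by $\{\pi(\hat{h}) : \hat{h} \in \hat{F}\}$; since $\dom H = F$, each $f \in F$ equals $\pi(\chi_0(f, H(f)))$, so this generating set is exactly $F$ and hence $\Psi[\hat{\Gamma}] = \langle F \rangle = \Gamma$. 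This shows $\Psi$ maps onto $\Gamma$ --- in particular $g \in \Gamma$, not merely $g \in S_\infty$ --- and that $\Psi(\chi_i(f, H(f))) = f$ for all $f \in F$ and $i \in 2$.

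I expect no genuine difficulty here: the only points to get right are fixing the order of composition so that $\pi$ (hence $\Psi$) is an honest homomorphism rather than an anti-homomorphism, and making sure the block-respecting property is claimed for all of $\hat{\Gamma}$ and not just for the generators --- which is precisely why it is cleanest to route the argument through the subgroup $\mathcal{B}$ rather than induct on word length.
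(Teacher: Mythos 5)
Your proof is correct and is essentially the paper's argument: the same computation shows that the generators and their inverses permute the blocks $B_n$ with $\chi_i(f,H(f))[B_n]=B_{f(n)}$, uniqueness follows from disjointness of the blocks, and surjectivity follows from $\Psi[\langle\hat{F}\rangle]=\langle\Psi[\hat{F}]\rangle=\langle F\rangle$. The only difference is organizational: where you package the closure step by noting that the block-respecting permutations form a subgroup $\mathcal{B}\le S_\infty$ containing $\hat{F}$, the paper establishes the same fact by induction on the length of a word in $\hat{F}^{\pm 1}$.
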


    \begin{proof}
        For uniqueness, suppose $g,h \in \Gamma$ satisfy $\hat{g}[B_n] = B_{g(n)} = B_{h(n)}$.
        Since the $B_n$'s are disjoint this implies $g = h$, so it suffices to prove existence.
        To this end, for $i \in 2$, $f \in F$ and $c := H(f)$, by definition of $\chi_i$ we clearly have that
        \[
            \chi_i(f, c)[B_n] = B_{f(n)}.
        \]
        Similarly, it is easy to see that also for the inverse $\chi_i(f, c)^{-1}$ we have
        \[
            \chi_i(f, c)^{-1}[B_n] = B_{f^{-1}(n)}.
        \]
        Thus, the required $g \in \Gamma$ exists for all generators and inverses thereof in $\hat{\Gamma}$.
        We prove the general case by induction on the length of $\hat{g}$, which we can consider as a reduced
        word in the alphabet $\hat{F}^{\pm1}$, so 
        let $\hat{g} = \hat{f}_1 \dots \hat{f}_k$ be such that 
        $\hat{f}_i \in \hat{F}^{\pm1}$ for each $i = 1, \dots, k$.
        If $k = 0$ then $\hat{g} = \id_\omega$ and $\id_\omega[B_n] = B_n = B_{\id_\omega(n)}$ with $\id_\omega \in \Gamma$.
        Now, let $k > 0$, $\hat{h}_1 = \hat{f}_1$ and $\hat{h}_2 = \hat{f}_2 \dots \hat{f}_k$.
        By induction, there are $h_1, h_2 \in \Gamma$ such that $\hat{h}_1[B_n] = B_{h_1(n)}$ and $\hat{h}_2[B_n] = B_{h_2(n)}$ for all $n \in \omega$.
        Then, for $n \in \omega$ we compute
        \[
            \hat{g}[B_n] = \hat{h}_1[\hat{h}_2[B_n]] = \hat{h}_1[B_{h_2(n)}] = B_{h_1(h_2(n))} = B_{(h_1 \circ h_2)(n)},
        \]
        proving the existence of the desired $g := h_1 \circ h_2$.
        This computation also proves the homomorphism property of $\Psi$.
        Furthermore, we obtain
        \[
            \Psi[\hat{\Gamma}] = \Psi[\langle \hat{F} \rangle] = \langle \Psi[\hat{F}] \rangle = \langle F \rangle = \Gamma,
        \]
        so $\Psi$ is surjective.
    \end{proof}

    \begin{lemma}\label{LEM_FlipAndKernel}
        Let $\tau:\omega \to \omega$ be the flip map defined for $n \in \omega$ by
        \[
            \tau(2n) := 2n +1 \quad \text{ and } \quad \tau(2n + 1) := 2n.
        \]
        Then $\tau \in \hat{\Gamma}$ is central in $\hat{\Gamma}$ and we have $\ker(\Psi) = \simpleset{\id_\omega, \tau}$.
    \end{lemma}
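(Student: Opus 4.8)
The plan is to establish, in this order, that $\tau \in \hat{\Gamma}$, that $\tau$ is central in $\hat{\Gamma}$, and finally that $\ker(\Psi) = \{\id_\omega, \tau\}$; the last point is where the hypothesis that $F$ \emph{freely} generates $\Gamma$ is genuinely used.

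For the first two assertions, fix any $f \in F$ --- such an $f$ exists since a maximal cofinitary group is non-trivial --- and put $c := H(f)$. Reading off the definitions of $\chi_0$ and $\chi_1$, one checks the two identities $\chi_1(f,c) = \chi_0(f,c) \circ \tau$ and $\chi_1(f,c) = \tau \circ \chi_0(f,c)$: indeed, applying $\tau$ to $2f(n) + c(n)$ (respectively to $2f(n) + 1 - c(n)$) simply exchanges it with the other element of the pair $B_{f(n)}$, which is exactly what turns one of $\chi_0, \chi_1$ into the other. The first identity exhibits $\tau = \chi_0(f,c)^{-1} \circ \chi_1(f,c) \in \hat{\Gamma}$, and comparing the two identities shows that $\tau$ commutes with $\chi_0(f,c)$; since $\chi_1(f,c) = \tau \circ \chi_0(f,c)$ is then a product of two elements commuting with $\tau$, it commutes with $\tau$ as well. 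As $\hat{F} = \chi_0[H] \cup \chi_1[H]$ generates $\hat{\Gamma}$ and the set of elements commuting with $\tau$ is a subgroup, $\tau$ is central. Finally, $\tau[B_n] = B_n$ for every $n \in \omega$, so the uniqueness clause of Lemma~\ref{LEM_Homomorphism} gives $\Psi(\tau) = \id_\omega$; together with $\tau \neq \id_\omega$ this yields $\{\id_\omega, \tau\} \subseteq \ker(\Psi)$.

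For the reverse inclusion, the idea is to realize $\hat{\Gamma}$ as the internal product $\langle \hat{F}_0 \rangle \cdot \langle \tau \rangle$, where $\hat{F}_0 := \chi_0[H]$, with $\langle \hat{F}_0 \rangle \cap \langle \tau \rangle = \{\id_\omega\}$ and $\Psi$ restricting to an isomorphism $\langle \hat{F}_0 \rangle \to \Gamma$. Since $F$ freely generates $\Gamma$, the universal property furnishes a homomorphism $\rho \colon \Gamma \to \hat{\Gamma}$ with $\rho(f) = \chi_0(f, H(f))$ for every $f \in F$; by Lemma~\ref{LEM_Homomorphism}, $\Psi \circ \rho$ fixes each $f \in F$, hence $\Psi \circ \rho = \id_\Gamma$, so $\rho$ is injective with image $\langle \hat{F}_0 \rangle$ and $\Psi|_{\langle \hat{F}_0 \rangle} = \rho^{-1}$ is an isomorphism --- in particular injective. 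Next, from $\chi_1(f,c) = \chi_0(f,c) \circ \tau$ together with the centrality of $\tau$ and $\tau^2 = \id_\omega$, one checks that $\langle \hat{F}_0 \rangle \cup \langle \hat{F}_0 \rangle\, \tau$ is a subgroup of $\hat{\Gamma}$ containing every generator, hence equals $\hat{\Gamma}$; and $\tau \notin \langle \hat{F}_0 \rangle$, since $\tau \in \langle \hat{F}_0 \rangle$ would give $\Psi(\tau) = \id_\omega = \Psi(\id_\omega)$ and then $\tau = \id_\omega$ by injectivity of $\Psi|_{\langle \hat{F}_0 \rangle}$. Now take any $\hat{g} \in \ker(\Psi)$ and write $\hat{g} = w$ or $\hat{g} = w \circ \tau$ with $w \in \langle \hat{F}_0 \rangle$; applying $\Psi$ gives $\id_\omega = \Psi(w)$, so $w = \id_\omega$, and therefore $\hat{g} \in \{\id_\omega, \tau\}$, completing the argument.

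I expect the one genuinely delicate step to be the injectivity of $\Psi|_{\langle \hat{F}_0 \rangle}$, for which freeness of $F$ is essential: without it a naive word-length argument breaks down, because $\chi_0(f,c)$ and $\chi_1(f,c)$ both map to $f$ under $\Psi$, so a reduced word in $\hat{F}^{\pm 1}$ whose $\Psi$-image collapses in $\Gamma$ need not be trivial in $\hat{\Gamma}$. The remaining steps are the short explicit computation with $\chi_0, \chi_1, \tau$ in the second paragraph and routine bookkeeping with the central involution $\tau$.
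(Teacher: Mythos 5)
Your proof is correct and follows essentially the same route as the paper: both establish the identities $\chi_1(f,c)=\tau\chi_0(f,c)=\chi_0(f,c)\tau$ to get centrality and $\tau\in\hat{\Gamma}$, then reduce an arbitrary kernel element to the form $\tau^i w$ with $w$ a word in $\chi_0[H]^{\pm1}$ and use freeness of $F$ to force $w=\id_\omega$. The only (harmless, arguably cleaner) difference is that you package the freeness step via the universal property of the free group, producing a section $\rho$ of $\Psi$ and deducing injectivity of $\Psi$ on $\langle\chi_0[H]\rangle$, whereas the paper argues directly that the reduced word in $F^{\pm1}$ must be trivial and that the cancellations lift.
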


    \begin{proof}
        First, note that $\tau^2 = \id_\omega$ and for every $f \in F$, $i \in 2$ and $c := H(f)$ we have
        \[
            \chi_0(f,c) \chi_1(f,c)^{-1} = \tau \quad \text{ and } \quad \chi_1(f,c)^{-1} \chi_0(f,c) = \tau.
        \]
        This shows that $\tau \in \hat{\Gamma}$ and that $\tau$ commutes with all generators of $\hat{\Gamma}$.
        Thus, $\tau$ is central in $\hat{\Gamma}$.
        Furthermore, $\Psi(\tau) = \id_\omega$, so $\ker(\Psi) \supseteq \simpleset{\id_\omega, \tau}$.
        Now, let $\hat{g} \in \ker(\Psi)$.
        Write $\hat{g}$ as a word in letters from $\hat{F}^{\pm1}$.
        Using $\chi_1^{\pm1} = \tau\chi_0^{\pm1} = \chi_0^{\pm1} \tau$, centrality of $\tau$ and $\tau^2 = \id_\omega$, replace each occurrence of $\chi_1$ by $\chi_0$ and move the $\tau$ to the front to obtain
        \[
            \hat{g} = \tau^i \hat{h},
        \]
        where $i \in 2$ and $\hat{h}$ is a word in the alphabet $\set{\chi_0(f, H(f))^{\pm1}}{ f \in F}$.
        Remember $\hat{g}, \tau \in \ker(\Psi)$, so apply $\Psi$ to obtain
        \[
            \id_\omega = \Psi(\hat{g}) = \Psi(\tau^i\hat{h}) = \Psi(\tau^i)\Psi(\hat{h}) = \Psi(\hat{h}).
        \]
        But by Lemma~\ref{LEM_Homomorphism} $\Psi$ is a homomorphism which maps $\chi_0(f, H(f))$ to $f$, so $\Psi(\hat{h})$ is the corresponding word in the generators $F^{\pm1}$ of $\Gamma$.
        But $\Gamma = \langle F \rangle$ is freely generated, so the equation above implies that $\hat{h} = \id_\omega$
        Consequently, $\hat{g} = \tau^i \in \simpleset{\id_\omega, \tau}$.
    \end{proof}

    \begin{proposition}\label{PROP_CofinitaryGroup}
        $\hat{\Gamma}$ is cofinitary.
    \end{proposition}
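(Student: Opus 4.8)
The plan is to push cofinitariness down along the surjective homomorphism $\Psi:\hat{\Gamma} \to \Gamma$ of Lemma~\ref{LEM_Homomorphism}, using the description $\ker(\Psi) = \simpleset{\id_\omega, \tau}$ from Lemma~\ref{LEM_FlipAndKernel}. I would fix an arbitrary non-identity $\hat{g} \in \hat{\Gamma}$, set $g := \Psi(\hat{g}) \in \Gamma$, and split into two cases according to whether $g = \id_\omega$.

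In the case $g \neq \id_\omega$, the key point is that every fixed point of $\hat{g}$ is trapped inside a block fixed by $g$: if $\hat{g}(m) = m$ and $m \in B_n$, then $m \in \hat{g}[B_n] \cap B_n = B_{g(n)} \cap B_n$, and since the blocks $\set{B_n}{n \in \omega}$ partition $\omega$ this forces $g(n) = n$. As $\Gamma$ is cofinitary and $g \neq \id_\omega$, there are only finitely many such $n$, and each $B_n$ has exactly two elements, so $\hat{g}$ can have only finitely many fixed points.

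In the case $g = \id_\omega$ we have $\hat{g} \in \ker(\Psi) = \simpleset{\id_\omega, \tau}$, hence $\hat{g} = \tau$ since $\hat{g} \neq \id_\omega$. A direct check shows that $\tau$ is a fixed-point-free involution, as $\tau(2n) = 2n+1 \neq 2n$ and $\tau(2n+1) = 2n \neq 2n+1$ for every $n$; in particular $\tau$ has finitely many fixed points. Combining the two cases, every non-identity element of $\hat{\Gamma}$ has finitely many fixed points, which is exactly the statement that $\hat{\Gamma}$ is cofinitary.

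I do not anticipate a genuine obstacle here, since the two lemmata do the heavy lifting; the only subtlety is that one must treat $\ker(\Psi)$ separately — cofinitariness of $\Gamma$ gives no information about it — and note explicitly that the flip $\tau$ is itself fixed-point-free.
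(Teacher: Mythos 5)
Your proposal is correct and follows essentially the same route as the paper: handle $\ker(\Psi) = \simpleset{\id_\omega, \tau}$ separately by noting $\tau$ is fixed-point-free, and for $\Psi(\hat{g}) = g \neq \id_\omega$ observe that $\fix(\hat{g}) \subseteq \bigcup_{n \in \fix(g)} B_n$, which is finite since $g$ is cofinitary and each block has two elements. No discrepancies to report.
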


    \begin{proof}
        Let $\hat{g} \in \hat{\Gamma} \setminus \simpleset{\id_\omega}$.
        If $\hat{g} \in \ker(\Psi)$, then by the previous lemma we can only have $\hat{g} = \tau$ which has no fixpoints.
        Thus, we may assume that $g := \Psi(g) \in \Gamma \setminus \simpleset{\id_\omega}$.
        Let $k \in \omega$ be a fixed point of $\hat{g}$, say $k \in B_n$, then $\hat{g}[B_n] = B_n$.
        So by Lemma~\ref{LEM_Homomorphism} we have $B_{g(n)} = \hat{g}[B_n] = B_n$.
        Thus, $n = g(n)$ and we get
        \[
            \fix(\hat{g}) \subseteq \bigcup_{n \in \fix(g)} B_n.
        \]
        But $g$ is cofinitary and each $B_n$ has size $2$, so $\left|\fix(\hat{g})\right| \leq 2 \left|\fix(g)\right| < \infty$.
    \end{proof}

    \begin{proposition}
        $\hat{\Gamma}$ is maximal as an eventually different family of permutations.
    \end{proposition}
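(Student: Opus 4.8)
The plan is to deduce the maximality of $\hat{\Gamma}$ as an eventually different family of permutations from the maximality of $\Gamma$, by a variant of the argument used in Theorem~\ref{THM_CoanalyticMEDP}, now exploiting the homomorphism $\Psi$ of Lemma~\ref{LEM_Homomorphism} together with the computation of its kernel in Lemma~\ref{LEM_FlipAndKernel}. First I would record that $\hat{\Gamma}$ is itself an eventually different family of permutations: this is immediate from Proposition~\ref{PROP_CofinitaryGroup}, since for distinct $\hat{g}_1, \hat{g}_2 \in \hat{\Gamma}$ the element $\hat{g}_2^{-1}\hat{g}_1 \in \hat{\Gamma} \setminus \simpleset{\id_\omega}$ is cofinitary and $\set{m \in \omega}{\hat{g}_1(m) = \hat{g}_2(m)} = \fix(\hat{g}_2^{-1}\hat{g}_1)$ is finite.

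Next, given an arbitrary $\hat{h} \in S_\infty$, I would reproduce the opening of the maximality argument of Theorem~\ref{THM_CoanalyticMEDP}: the map $n \mapsto \lfloor\frac{\hat{h}(n)}{2}\rfloor$ is $2$-to-$1$, so Lemma~\ref{LEM_2to1Bijection} yields $i:\omega \to 2$ for which $g(n) := \lfloor\frac{\hat{h}(2n+i(n))}{2}\rfloor$ defines a bijection $g \in S_\infty$. Crucially, here I would invoke the hypothesis that $\Gamma$ (not $F$) is \emph{maximal} as an eventually different family of permutations, to obtain $\gamma \in \Gamma$ and an infinite $A \subseteq \omega$ with $\gamma\restr A = g\restr A$. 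Then I would lift: by surjectivity of $\Psi$ (Lemma~\ref{LEM_Homomorphism}) there is $\hat{\gamma} \in \hat{\Gamma}$ with $\Psi(\hat{\gamma}) = \gamma$, and by Lemma~\ref{LEM_FlipAndKernel} the fiber $\Psi^{-1}(\gamma) = \hat{\gamma}\ker(\Psi)$ consists of exactly the two elements $\hat{\gamma}$ and $\tau\hat{\gamma}$ (using centrality of $\tau$), where $\tau$ is the flip swapping the two elements of each block $B_m$.

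The key step is to verify that one of the two lifts $\hat{\gamma}$, $\tau\hat{\gamma}$ agrees with $\hat{h}$ on an infinite set. For $n \in A$, since $2n+i(n) \in B_n$ and $\hat{\gamma}[B_n] = B_{\gamma(n)} = B_{g(n)}$ by Lemma~\ref{LEM_Homomorphism}, we get $\hat{\gamma}(2n+i(n)) \in B_{g(n)}$; and by definition of $g$ also $\hat{h}(2n+i(n)) \in B_{g(n)} = \simpleset{2g(n), 2g(n)+1}$. Since $(\tau\hat{\gamma})(2n+i(n)) = \tau(\hat{\gamma}(2n+i(n)))$ is precisely the element of $B_{g(n)}$ other than $\hat{\gamma}(2n+i(n))$, for every $n \in A$ we have $\hat{h}(2n+i(n)) \in \simpleset{\hat{\gamma}(2n+i(n)),\ (\tau\hat{\gamma})(2n+i(n))}$. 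Splitting $A$ into $A_0 := \set{n \in A}{\hat{\gamma}(2n+i(n)) = \hat{h}(2n+i(n))}$ and $A_1 := A \setminus A_0$, one of these sets is infinite; since $n \mapsto 2n+i(n)$ is injective, in the first case $\hat{\gamma} =^\infty \hat{h}$ and in the second case $\tau\hat{\gamma} =^\infty \hat{h}$. As $\tau, \hat{\gamma} \in \hat{\Gamma}$, this produces the required element of $\hat{\Gamma}$ witnessing maximality.

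I expect the only real subtlety to be recognizing that the two-element kernel $\ker(\Psi) = \simpleset{\id_\omega,\tau}$ is exactly what offsets the information lost in passing from $\hat{h}$ to $g$: the coding confines each value of an element of $\hat{\Gamma}$ to a size-two block dictated by $\Gamma$, and the two lifts of a fixed $\gamma$ realize both options in every block, so a counting argument over the infinite set $A$ closes the gap. Everything else — that $\hat{\Gamma}$ is an eventually different family, the $2$-to-$1$ bookkeeping via Lemma~\ref{LEM_2to1Bijection}, and the injectivity of $n \mapsto 2n+i(n)$ — is routine.
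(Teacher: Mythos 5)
Your proposal is correct and follows essentially the same route as the paper: reduce $\hat{h}$ to a bijection $g$ via Lemma~\ref{LEM_2to1Bijection}, apply the maximality of $\Gamma$ as an eventually different family of permutations to get $\gamma\in\Gamma$ agreeing with $g$ on an infinite set, lift $\gamma$ along $\Psi$, and use the fact that the two lifts $\hat{\gamma}$ and $\tau\hat{\gamma}$ exhaust each block $B_{\gamma(n)}$ together with a pigeonhole over $A$. Your write-up is in fact somewhat more careful than the paper's (which has a few notational slips), but there is no substantive difference in the argument.
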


    \begin{proof}
        Let $\hat{h} \in S_\infty$ and by Lemma~\ref{LEM_2to1Bijection} choose $i:\omega \to 2$, so that
        \[
            h(n) := \lfloor \frac{\hat{h}(2n + i(n))}{2} \rfloor
        \]
        is a bijection.
        By maximality of $\Gamma$ as an eventually different family of permutations there is $g \in \Gamma$ and $A \in \infsubset{\omega}$ such that $h \restr A = g \restr A$.
        Let $\hat{g}$ be the corresponding word in $\hat{\Gamma}$, where every occurrence of $f^{\pm1} \in F^{\pm1}$ appearing
        in $g$ is replaced by $\chi_0(f, H(f))^{\pm1}$.
        Thus, we have $\Psi(\hat{g}) = g$, so as before, for every $n \in A$ there are $j,k \in 2$ such that
        \[
            \hat{h}(2n + i(n)) = 2h(n) + j = 2g(n) + j = \tau^k\hat{g}(2n + i(n)).
        \]
        But this implies either $\hat{g} =^\infty \hat{h}$ or $\hat{g} =^\infty \tau \hat{h}$, as desired.
    \end{proof}

    \begin{remark}
        The generated group $\hat{\Gamma}$ is not free, but by the considerations above, its isomorphism type is given by the product $\mathbb{\ZZ} / 2 \times F$, where $(1, \id_\omega)$ corresponds to the element $\tau$.
    \end{remark}

    Note, that the proof methods in this section crucially required $\Gamma$ to be freely-generated.
    Dropping this extra assumption likely needs a completely different proof idea.

	\begin{question}
		Does an analogous result to Theorem \ref{THM_CoanalyticMCG} hold for non-freely generated groups or even better, can such a result be proven for entire groups and not just their generating sets?
	\end{question}

      \section{Ideal independent families}
    A family $\I \subseteq \infsubset{\omega}$ is 
    \emph{ideal independent} if for all $F \in [\I]^{< \omega}$
    and $a \in \I \setminus F$, it is not the case that 
    $a \subseteq^* \bigcup F$. 
    An ideal independent family $\I$ is \emph{maximal 
    ideal independent} if $\I$ is maximal with respect to 
    inclusion. Equivalently, for every $b \in \infsubset{\omega}$
    there exists a finite $F \subseteq \I$ such that one 
    of the following occurs: 
    \begin{itemize}
        \item $b \subseteq^* \bigcup F$, or
        \item there is $a \in \I \setminus F$ such that 
        $a \subseteq^* b \union \bigcup F$. 
    \end{itemize}
    The cardinal $\mathfrak{s}_{mm}$ is defined as the minimum
    size of a maximal ideal independent family; results 
    regarding
    the relations between $\mathfrak{s}_{mm}$ and 
    other cardinal invariants can be found in \cite{CGM21} and 
    \cite{BCFS25}. Clearly maximal ideal independent families 
    can be obtained using the Axiom of Choice, however as of now 
    the minimal projective complexity of such a family is 
    unknown. The theorem below gives an upper bound of $\Pi_1^1$.

    \begin{theorem} \label{THM_coanalyticsmm}
        If there exists a $\Sigma^1_2$ maximal ideal-independent family,
        then there exists a $\Pi^1_1$ maximal ideal-independent family of the same size.
    \end{theorem}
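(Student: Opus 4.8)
The plan is to follow the same template as the previous three theorems: take a $\Sigma^1_2$ maximal ideal independent family $\I$, write it as the projection of a $\Pi^1_1$ graph $H \subseteq \infsubset{\omega} \times \cantorspace$ of a partial function (by uniformization), and then code the Cantor-space witness $c = H(a)$ into each set $a$ in a way that (i) preserves ideal independence, (ii) preserves maximality, and (iii) allows the coded real to be recovered from the resulting set by a Borel/recursive operation, so that Spector–Gandy gives a $\Pi^1_1$ definition. The natural coding is to replace each $a \in \I$ by $\chi(a,c) \subseteq \omega$ living on the partition of $\omega$ into pairs $B_n = \{2n, 2n+1\}$: put $2n \in \chi(a,c)$ iff $n \in a$ and $c(n) = 0$, and $2n+1 \in \chi(a,c)$ iff $n \in a$ and $c(n) = 1$; equivalently, $\chi(a,c) = \{\,2n + c(n) : n \in a\,\}$. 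Thus $\chi(a,c)$ is a "thinned copy" of $a$ that selects exactly one element of $B_n$ for each $n \in a$, and $c$ is read off from $\chi(a,c)$ by $c(n) = 0$ if $2n \in \chi(a,c)$, $c(n) = 1$ if $2n+1 \in \chi(a,c)$ — a recursive procedure, defined precisely on those $n$ with $\chi(a,c) \cap B_n \neq \emptyset$, i.e. on $a$ itself. Let $\G := \chi[H] = \{\chi(a, H(a)) : a \in \I\}$.

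The first main step is to verify that $\G$ is ideal independent. The key observation is that, writing $\pi : \omega \to \omega$ for the recursive map $k \mapsto \lfloor k/2 \rfloor$, we have $\pi[\chi(a,c)] = a$ for every $a, c$, and more importantly $\chi(a,c) \subseteq^* \bigcup_{b \in B} \chi(b, H(b))$ would imply, applying $\pi$, that $a \subseteq^* \bigcup B$ in $\I$; so for $a \notin F$ finite $\subseteq \I$, ideal independence of $\I$ gives $a \not\subseteq^* \bigcup F$, hence $\chi(a, H(a)) \not\subseteq^* \bigcup \chi[F \text{-part}]$. One has to be slightly careful that distinct elements of $\I$ give distinct elements of $\G$ and that $\chi$ is injective on the relevant domain — this follows since $\pi$ recovers $a$ and $H$ is a function. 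Also $\chi(a,c)$ is infinite whenever $a$ is.

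The second main step — and the one I expect to be the real obstacle — is maximality of $\G$. Given an arbitrary infinite $b \subseteq \omega$, I need to produce a finite $\F \subseteq \G$ witnessing one of the two maximality alternatives. The idea is to apply maximality of $\I$ to a suitable "decoded" version of $b$: let $\hat b := \pi[b] = \{\lfloor k/2\rfloor : k \in b\}$, which is infinite. Maximality of $\I$ gives a finite $F \subseteq \I$ with either $\hat b \subseteq^* \bigcup F$ or some $a \in \I \setminus F$ with $a \subseteq^* b' \cup \bigcup F$ where $b' := \hat b$. The difficulty is that $\chi(a, H(a))$ selects a specific side of each pair, and $b$ need not agree with that selection on $B_n$: from $a \subseteq^* \hat b \cup \bigcup F$ one only gets that $\chi(a, H(a)) \subseteq^* b'' \cup \bigcup \chi[F']$ where $b''$ consists of one element of each $B_n$ with $n \in \hat b$, and $b'' \subseteq^* b \cup (\text{the "flip" of } b)$, not $b''\subseteq^* b$. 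To repair this, the plan is to enlarge $\F$ by also throwing in an extra fixed element of $\G$ (or finitely many) that "covers the other side" — concretely, one should arrange in the construction that $\G$ contains, for a cofinite set of $n$, sets meeting both $2n$ and $2n+1$ often enough; alternatively, and more cleanly, note $b \cup \tau[b]$ where $\tau$ is the flip map satisfies $\hat b = \pi[b] = \pi[b \cup \tau[b]]$ and handle the covering case and the $a$-case by absorbing $\tau$-flips using that $\bigcup F$ already contains full pairs $B_n$ cofinitely (since if $n \in \hat b$ is witnessed by $k \in b$, and also $a$ lives on $B_n$, then $\chi(a, H(a)) \cap B_n$ is one point, which lies in $b$ or in $\tau[b]$; the latter points must be covered by $\bigcup F$-images). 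I expect that making this bookkeeping actually work will require either a small modification of the coding (e.g. coding into blocks of length $3$ so that one coordinate is always "free" and can be used as a universal padding set, à la the $\a_p$ discussion's $\Pi^0_2$ remark) or adding one distinguished generator; once the covering is arranged, the recovery of $c$ and the Spector–Gandy argument go through exactly as before, yielding the $\Pi^1_1$ definition
\[
	x \in \G \iff \exists (a,c) \in \Delta^1_1(x)\,[(a,c) \in H \wedge \chi(a,c) = x].
\]
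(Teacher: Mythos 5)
You correctly identify the template and, to your credit, you also correctly identify exactly where it breaks: with the pair-coding $\chi(a,c)=\set{2n+c(n)}{n\in a}$, the maximality step fails because an arbitrary $b$ may sit on the side of each pair $B_n$ that $c$ did not select. But the proposal stops there --- the repair is left as ``I expect this will require a small modification of the coding or one distinguished generator,'' and that repair is precisely the mathematical content of this theorem beyond the earlier ones. Moreover, the specific patch you float for the two-block coding cannot work: in case (2) of maximality you would need the selected point $2n+c(n)$ to lie in $b\cup w\cup\bigcup\chi[F]$ for almost all $n\in a$, for \emph{every} admissible $b$; since $b$ controls which side of $B_n$ it meets, the padding set $w$ would have to contain almost every point of $\bigcup_{n\in a}B_n$ for every $a\in\I$, i.e.\ almost all of $\omega$, which destroys ideal independence. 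So a thinning coding on blocks of length $2$ is not salvageable by adding finitely many generators.

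The paper's fix is the ``three-track'' version you gesture at, but carried out with a payload/code separation rather than a thinning: $\chi(x,c):=\set{3n}{n\in x}\cup\set{3n+1}{n\in c}$ (so $x$ is copied onto $3\omega$, untouched, and the code $c$ is written on $3\omega+1$), together with one distinguished set $z:=\set{3n+1}{n\in\omega}\cup\set{3n+2}{n\in\omega}$, and $\J:=\chi[H]\cup\simpleset{z}$. Maximality is then applied to $b_0:=\set{n}{3n\in b}$, and $z$ absorbs everything off the payload track: it covers $b\setminus 3\omega$ in case (1) and covers the code part $\set{3n+1}{n\in c}$ of $\chi(x,c)$ in case (2). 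The verification you omit, and which the placement on residues mod $3$ is designed to make true, is that adding $z$ preserves ideal independence: each $\chi(x,c)$ contains the infinite set $3x$ disjoint from $z$ (so $z\in F$ costs nothing), and $z$ contains the infinite set $3\omega+2$ disjoint from every $\chi(x,c)$ (so $z$ is never almost covered). The Spector--Gandy step is as you describe, with $x$ and $c$ recovered recursively from the residue classes $0$ and $1$ mod $3$.
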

    
    \begin{proof}
        Let $\I$ be a $\Sigma_2^1$ maximal ideal independent
        family, and let $H \subseteq \infsubset{\omega} \times 
        \infsubset{\omega}$ be a
        coanalytic set such that 
        $x \in \I$ if and only if there exists 
        $c \in \infsubset{\omega}$ with $(x,c) \in H$; by 
        uniformization, we can assume $H$ is the graph 
        of a function. 
        Define a function $\chi: \infsubset{\omega} \times \infsubset{\omega} \to \infsubset{\omega}$ and $z \in \infsubset{\omega}$ by
        \begin{align*}
            \chi(x, c) &:= 3x \cup 3c + 1 &&= \set{3n}{n \in x} \cup \set{3n+1}{n \in c},\\
            z &:= \phantom{3xaa} 3\omega + 1 \cup 3 \omega + 2 &&= \set{3n + 1}{n \in \omega} \cup \set{3n+2}{n \in \omega}.
        \end{align*}
        Let 
        \[
            \J := \chi[H] \cup \{z\}.
        \]
        We will show $\J$ is maximal ideal independent. 
        To see it is ideal independent, let $F \in [\J]^{<\omega}$
        be arbitrary and let $x \in \J \setminus  F$.
        First, suppose $x \neq z$ and $z \not\in F$.  Let 
        $(a,c) \in (\infsubset{\omega})^2$ be such that 
        $\chi(a,c)= x$.  Then $x \setminus \bigcup F$ must be
        infinite, as it contains the set 
        $a \setminus \bigcup \mathrm{proj}_0[\chi^{-1}[F]]$,
        where $\mathrm{proj}_0 : \infsubset{\omega} \times 
        \infsubset{\omega} \to \infsubset{\omega}$ is the
        projection onto the first coordinate. 
        Now suppose $x = z$. Then $z \setminus \bigcup F$ is
        infinite, as it contains the set 
        $\set{3n+2}{n \in \omega} \subseteq z$. 
        Lastly, suppose $x \neq z$ and $z \in F$;
        by the first case, it suffices to consider the case 
        $F = \simpleset{z}$. 
        But clearly, the set  $\set{3n}{n \in a} \subseteq x$ is an 
        infinite subset disjoint from $z$.

        Towards maximality, let $b \in \infsubset{\omega}$, and 
        consider $b_0 := \set{n \in \omega}{3n \in b}$. 
        By maximality of $\I$, there
        exists $F \in [\I]^{<\omega}$ such that one of the
        following occurs: 
            \begin{enumerate}
            \item $b_0 \subseteq^* \bigcup F$, or 
            \item there exists $x \in \I \setminus F$ such 
            that $x \subseteq^* b_0 \union \bigcup F$. 
            \end{enumerate}
        Suppose we are in the first case. Then 
        $b \subseteq^* \simpleset{z} \union \bigcup \chi[(F \times \infsubset{\omega}) \cap H]$, 
        since on the one hand $b \cap 3\omega$ is almost covered by 
        $\chi[(F \times \infsubset{\omega}) \cap H]$, and on the other hand,
        \[
            b \setminus 3\omega = \simpleset{3n+1 \in b} \union 
            \simpleset{3n+2 \in b} \subseteq z.
        \]
        Otherwise, fix $x \in \I$ as in case (2), and let 
        $c$ be such that $(x,c) \in H$. 
        Then $\chi(x,c)$ is an element of $\J$ such 
        that $\chi(x,c) \subseteq^* b \union \bigcup (\chi[(F \times \infsubset{\omega}) \cap H]
        \union \simpleset{z})$, since 
        $\set{3n+1}{n \in c} \subseteq z$.
        
        We claim that $\J$ is coanalytic. 
        First, note that the set $\simpleset{z}$ is clearly 
        $\Delta_1^1$-definable. Next, we have that the set 
        $\chi[H]$ is $\Pi_1^1$, as it is defined by
        \[
            x \in \chi[H] \Leftrightarrow 
            \exists a,c \in \Delta_1^1(x) [(a,c) \in H \wedge 
            \chi(a,c) = x].
        \]
        Indeed, given $x \in \infsubset{\omega}$, 
        $a$ is reconstructible by $x$ as the set $\set{n \in \omega}{3n \in x}$.
        Similarly, also $c$ is reconstructible from $x$. 
        Then, being the union of two coanalytic sets, 
        $\J$ is coanalytic.
    \end{proof}
    
    An ultrafilter $\U$ is called a \emph{p-point} if 
    for any countable $\F \subseteq \U$, there exists 
    $Y \in \U$ such that $Y \subseteq^* X$ for every 
    $X \in \F$. 
    Recently, Bardyla, Cancino-Manriquez, Fischer, and Switzer
    definethe notion of $\U$\emph{-encompassing} ideal
    independent family, where $\U$ is an ultrafilter on 
    $\omega$ (see \cite[Definition 5.1]{BCFS25}). When such $\U$ is a p-point and under some additional assumptions, this strengthening of the maximality condition 
    for ideal independent families isolates a subclass of 
    maximal ideal independent families which are indestructible 
    by any proper, $\omega^\omega$-bounding, p-point preserving 
    forcing notion. 
    Moreover, they show that under \textsf{CH}, 
    for any p-point $\U$ there exists
    a $\U$-encompassing ideal independent family with the
    required additional assumptions for indestructibility
    \cite[Theorem 5.2]{BCFS25}. Thus, adapting their 
    construction in \textsf{L} yields a $\Sigma_2^1$-definable
    maximal ideal independent family which remains maximal under 
    a broad class of proper forcing notions. 
    One particular instance of a proper, $\omega^\omega$-bounding
    forcing notion which preserves p-points is the construction 
    of a $\Delta_3^1$ wellorder of the reals in a model of 
    $\mathfrak{c} = \aleph_2$ given by Fischer and Friedman 
    in 2010 \cite{FischerFriedman_2010}; see also \cite{bergfalk2022projective}. Using these 
    observations together with Theorem \ref{THM_coanalyticsmm},
    we 
    obtain the following:

    \begin{corollary}
        It is consistent with the existence of a 
        $\Delta_3^1$ total wellorder of the reals that
        $\aleph_1 =\mathfrak{s}_{mm} < \mathfrak{c}= \aleph_2$,
        and there is a coanalytic maximal ideal independent 
        family of size $\aleph_1$.
    \end{corollary}

	\bibliographystyle{plain}
	\bibliography{refs}
	
\end{document}